\documentclass[12pt,a4paper]{article}
\usepackage[left=3cm,right=3cm,top=1.5cm,bottom=1.5cm]{geometry}
\usepackage[utf8]{inputenc}
\usepackage[english]{babel}
\usepackage[T1]{fontenc}
\usepackage{amsmath}
\usepackage{amsthm}
\usepackage{amsfonts}
\usepackage{amssymb}
\usepackage{dsfont}
\usepackage{graphicx}
\usepackage[usenames,dvipsnames,svgnames,table]{xcolor}
\usepackage{kpfonts}
\usepackage{extarrows}
\usepackage{natbib}
\usepackage[colorlinks,citecolor=blue,linkcolor=blue]{hyperref}
\usepackage{enumitem}



\tolerance=1000         
 
\emergencystretch=1cm   


\newcommand{\Z}{\mathbb{Z}}

\newcommand{\R}{\mathbb{R}}


\newcommand{\ESP}{\mathbb{E}}

\newcommand{\sas}{\mathcal{S}\alpha\mathcal{S}}
\newcommand{\Za}[1]{\mathrm{Z}_{\alpha} \left( {#1} \right) }
\newcommand{\supp}[1]{\mathrm{supp} \left( {#1} \right)}

\def\ce{{\cal C}}

\def\La{\mathrm{L}^{\alpha}}

\def\o{\omega}

\def\al{\alpha}

\def\ind{\mathds{1}}

\newtheorem{thm}{Theorem}[section]
\newtheorem{lemma}{Lemma}[section]
\newtheorem{proposition}{Proposition}[section]
\newtheorem{corollary}{Corollary}[section]
\newtheorem{definition}{Definition}[section]
\newtheorem{rem}{\textbf{Remark}}[section]


\title{Behaviour of linear multifractional stable motion: membership of a critical H\"older space}
\author{Antoine Ayache \\
Univ. Lille, CNRS, UMR 8524 - Laboratoire Paul Painlevé,\\ F-59000 Lille, France.\\
E-mail: \texttt{Antoine.Ayache@math.univ-lille1.fr}\\
\and
Julien Hamonier \\
Univ. Lille, CHU Lille, EA 2694 - Santé publique : \\ épidémiologie et qualité des soins, F-59000 Lille, France.\\
E-mail: \texttt{Julien.Hamonier@univ-lille2.fr}
 }
\date{}

\begin{document}
\maketitle

\begin{abstract}
The study of path behaviour of stochastic processes is a classical topic in probability theory and related areas. In this frame, a natural question one can address is: whether or not sample paths belong to a critical H\"older space? The answer to this question is negative in the case of Brownian motion and many other stochastic processes: it is well-known that despite the fact that Brownian paths satisfy, on each compact interval $I$, a H\"older condition of any order strictly less than $1/2$, they fail to belong to the critical H\"older space $\ce^{1/2}(I)$. In this article, we show that a different phenomenon happens in the case of linear multifractional stable motion (LMSM): for any given compact interval one can find a critical H\"older space to which sample paths belong. Among other things, this result improves an upper estimate, recently derived in \citep{Bierme13}, on behaviour of LMSM, by showing that the logarithmic factor in it is not needed.
\end{abstract}

\textbf{Mathematics Subject Classification (2010)}: 60G22, 60G52, 60G17.\\

\textbf{Keywords}: Linear multifractional stable motions, wavelet series representations, moduli of continuity, H\"older regularity.

\section{Introduction and statement of the main result}

Over the two last decades, there has been a growing interest in probabilistic models based on fractional and multifractional processes (see e.g. \citep{ayache2005multifractional,ayache2007wavelet,
ayache2011multiparameter,falconer2002tangent,falconer2003local,roux1997elliptic,peltier1995multifractional,
lacaux04,Sur,Bi,BiP,BiPP,ayache2011prostate,stoev2004stochastic,stoev2005path,meerschaert2008local,bardet2010nonparametric,dozzi2011,HLS12,LRMT11,stoev2006rich,hamonier2012lmsm,Bierme13,balancca2014fine,balancca2015mul}). Actually, they convey a convenient framework for modelling in several fields such as Internet traffic and finance. 

In this article, we focus on one of the the most typical multifractional processes with heavy-tailed stable distributions: the linear multifractional stable motion (LMSM), which was first introduced in \citep{stoev2004stochastic,stoev2005path} and studied in \citep{hamonier2012lmsm,Bierme13,falconer2009localizable,falconer2009multifractional,balancca2014fine}. In order to precisely define LMSM, let $\al\in (1;2)$ be a fixed real number and $\Za{ds}$ be an independently scattered symmetric $\al$-stable ($\sas$) random measure on $\R$, with Lebesgue control measure \citep{SamTaq}; the underlying probability space is denoted by $(\Omega,\mathcal{F},\mathbb{P})$. Also, let $H(\cdot)$ be an arbitrary deterministic continuous function defined on the real line with values in $(1/\al;1)$.

\begin{definition}
\label{defj:lmsm}
The LMSM $\{Y(t):t\in \R\}$ with Hurst functional parameter $H(\cdot)$, is defined, for each $t\in \R$, as,
\begin{equation}\label{m:def:lmsm}
Y(t)=X(t,H(t)),
\end{equation}
where $X:=\big\{X(u,v):(u,v)\in \R\times (1/\al;1)\big\}$ is the real-valued $\sas$ random field on the probability space $(\Omega,\mathcal{F},\mathbb{P})$, such that, for every $(u,v)\in \R\times (1/\al;1)$,
\begin{equation}
\label{def:champX}
X(u,v)=\int_{\R}\Big\{ (u-s)_+^{v-1/\alpha} - (-s)_+^{v-1/\alpha} \Big\} \Za{ds}.
\end{equation}
Recall that for all $(x,\kappa)\in\R^2$, 
\begin{equation}
\label{h:eq:pospart}
(x)_+^{\kappa}=x^{\kappa}\mbox{ if $x>0$ and}\,\,(x)_+^{\kappa}=0\mbox{ else.}
\end{equation}
\end{definition}

\begin{rem}
Notice that in the special case where $H(\cdot)$ is a constant function denoted by $H$, LMSM reduces to linear fractional stable motion (LFSM) of Hurst parameter $H$; we mention that sample path properties of the latter process have already been studied in \citep{Tak89,maejima1983self,kono1991holder,SamTaq,balancca2014fine}. Also notice that when $\alpha=2$ LMSM reduces to the classical multifractional Brownian motion (MBM) of functional parameter $H(\cdot)$ which was introduced in the mid 1990s independently by \citep{roux1997elliptic} and by \citep{peltier1995multifractional}.
\end{rem}

In order to precisely explain the motivation behind the present article, one has to make a brief presentation of some previous results on path behaviour of LMSM which were obtained in the literature. For the sake of clarity, first, it is useful to recall the precise definition of the H\"older space $\ce^{\gamma}(I;\R)$.
\begin{definition}\label{def:holderspace}
Let $I$ be a compact interval of $\R$.  For each real number $\gamma \in [0;1)$, the H\"older space $\ce^{\gamma}(I):=\ce^{\gamma}(I;\R)$ is the set of the real-valued continuous functions $f$ on $I$ satisfying:
$$
\sup_{(x,y)\in I^2;\, x\neq y}\left\{\frac{|f(x)-f(y)|}{|x-y|^{\gamma}}\right\}<\infty.
$$
\end{definition}

Stoev and Taqqu obtained, in their article \citep{stoev2005path}, a first result about global path behaviour of LMSM on a compact interval. Namely, thanks to a strong version of the Kolmogorov continuity theorem, they showed the following result.
\begin{thm}\citep[]{stoev2005path}\label{thm:stoevtaqquH}
Let $I$ be a compact interval of $\R$. Assume that the functional parameter $H(\cdot)$ of LMSM $Y$ satisfies:
$$
(\mathbf{A}) \qquad H(I) \subset (1/\alpha;1) \mbox{ and } H(\cdot) \in \ce^{\gamma_H}(I) \mbox{ for some $\gamma_H\in (1/\al;1)$.}
$$
Then LMSM has a modification whose paths are, with probability $1$, continuous functions on $I$. Moreover, for all $\gamma < \min_{t\in I} H(t)- 1/\alpha$, they belong to the H\"older space $\ce^{\gamma}(I)$.
\end{thm}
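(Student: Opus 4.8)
The plan is to apply the strong (quantitative) form of the Kolmogorov continuity criterion, exploiting the fact that, since $\al\in(1;2)$, each $\sas$ variable $Y(t)-Y(t')$ possesses finite absolute moments of every order $p\in[1;\al)$. First I would recall the moment formula for a real $\sas$ integral: if $\int_{\R}f(s)\,\Za{ds}$ has scale parameter (modular $\La$-norm) $\|f\|_{\La}=\big(\int_{\R}|f(s)|^{\al}\,ds\big)^{1/\al}$, then $\ESP\big|\int_{\R}f(s)\,\Za{ds}\big|^{p}=c_{p,\al}\,\|f\|_{\La}^{p}$ for a finite constant $c_{p,\al}$ depending only on $p$ and $\al$. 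Hence controlling $\ESP|Y(t)-Y(t')|^{p}$ reduces to controlling the $\La$-norm of the deterministic kernel $g_{t,H(t)}-g_{t',H(t')}$, where $g_{u,v}(s):=(u-s)_+^{v-1/\al}-(-s)_+^{v-1/\al}$ is the kernel of $X(u,v)$.

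The core idea is to separate the two distinct ways in which the kernel of $Y(t)=X(t,H(t))$ depends on $t$: through the spatial argument $u=t$ and through the Hurst argument $v=H(t)$. Accordingly I would write
\begin{equation*}
g_{t,H(t)}-g_{t',H(t')}=\big(g_{t,H(t)}-g_{t',H(t)}\big)+\big(g_{t',H(t)}-g_{t',H(t')}\big),
\end{equation*}
and bound each bracket in $\La$ by the triangle inequality. The first (spatial-increment) term is the kernel of an increment of linear fractional stable motion of the fixed index $v=H(t)$; using the stationarity of its increments together with its $H(t)$-self-similarity, its $\La$-norm equals $c_{H(t)}\,|t-t'|^{H(t)}$, where $v\mapsto c_{v}$ is continuous on the compact set $H(I)\subset(1/\al;1)$. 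For $|t-t'|\le 1$ this is therefore at most $C\,|t-t'|^{\min_{s\in I}H(s)}$, uniformly over $I$.

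The delicate part — and the main obstacle — is the second (Hurst-increment) term, which encodes the genuinely multifractional behaviour. Here I would show that $v\mapsto g_{u,v}$ is differentiable in $\La$, with $\partial_{v}g_{u,v}(s)=(u-s)_+^{v-1/\al}\ln\big((u-s)_+\big)-(-s)_+^{v-1/\al}\ln\big((-s)_+\big)$, and that $\|\partial_{v}g_{u,v}\|_{\La}$ is finite and bounded uniformly for $u$ in the compact set $I$ and $v$ in a compact subinterval of $(1/\al;1)$. This requires a careful estimate near the singularities $s=0$ and $s=u$ (where the logarithmic factors remain integrable against $|s|^{(v-1/\al)\al}$ precisely because $v>1/\al$) and, above all, as $|s|\to\ii$, where one must compensate the two terms by a mean-value argument to gain the decay of order $|s|^{v-1/\al-1}\ln|s|$, which is $\al$-integrable exactly because $v<1$. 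Granted this uniform bound, a mean-value inequality in $v$ together with the hypothesis $H(\cdot)\in\ce^{\gamma_H}(I)$ yields $\|g_{t',H(t)}-g_{t',H(t')}\|_{\La}\le C\,|H(t)-H(t')|\le C'\,|t-t'|^{\gamma_H}$.

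Combining the two estimates gives $\|g_{t,H(t)}-g_{t',H(t')}\|_{\La}\le C\,|t-t'|^{\eta}$ with $\eta:=\min\{\min_{s\in I}H(s),\gamma_H\}$, whence $\ESP|Y(t)-Y(t')|^{p}\le C\,|t-t'|^{\eta p}$ for every $p\in[1;\al)$. Since $\eta>1/\al$, one may pick $p<\al$ with $\eta p>1$, so the Kolmogorov criterion furnishes a continuous modification belonging to $\ce^{\gamma}(I)$ for every $\gamma<\eta-1/p$; letting $p\uparrow\al$ produces every $\gamma<\eta-1/\al$. The regularity is thus governed by the smaller of the two local exponents, and the stated exponent $\min_{s\in I}H(s)-1/\al$ is recovered precisely in the regime where the spatial increment is the binding term, the role of the assumption $\gamma_H>1/\al$ being to guarantee that the Hurst-increment contribution is of at least this order and in particular does not destroy continuity.
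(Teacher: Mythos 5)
Your moment estimates are essentially correct and match the route the paper attributes to Stoev and Taqqu (stable-integral moment formula plus a Kolmogorov-type criterion): the identity $\ESP\big|\int_{\R} f(s)\,\Za{ds}\big|^{p}=c_{p,\al}\|f\|_{\La}^{p}$ for $p<\al$, the self-similarity computation giving $c_{v}|t-t'|^{v}$ for the spatial increment, and the $\La$-differentiability in $v$ of the kernel (including the integrability analysis at $s=0$, $s=u$ and at infinity) are all sound, and they do give $\ESP|Y(t)-Y(t')|^{p}\le C|t-t'|^{\eta p}$ with $\eta=\gamma_H\wedge\min_{t\in I}H(t)$. The gap is in the final step. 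Feeding this combined bound into the one-parameter Kolmogorov criterion makes you pay the penalty $1/p\to 1/\al$ on \emph{both} contributions, so what you actually prove is membership of $\ce^{\gamma}(I)$ only for $\gamma<\big(\gamma_H\wedge\min_{t\in I}H(t)\big)-1/\al$, whereas the theorem asserts it for all $\gamma<\min_{t\in I}H(t)-1/\al$. The two conclusions differ whenever the best available H\"older exponent of $H(\cdot)$ satisfies $1/\al<\gamma_H<\min_{t\in I}H(t)$, a situation that condition $(\mathbf{A})$ permits: take for instance $\al=3/2$, $H(\cdot)$ with values in $[0.85;0.95]$ and belonging to $\ce^{0.7}(I)$ but to no $\ce^{\gamma}(I)$ with $\gamma>0.7$; your exponent bound is then $0.7-2/3=1/30$, far below the stated $0.85-2/3$. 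Your closing sentence ("recovered precisely in the regime where the spatial increment is the binding term") concedes exactly this restriction, but the theorem carries no such restriction.

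The loss is not intrinsic, and removing it requires treating the two directions asymmetrically rather than through a single scalar moment bound on $Y$. The Hurst-direction term $X(t',H(t))-X(t',H(t'))$ should not be routed through the Kolmogorov criterion at all: pathwise --- not merely in $L^{p}$ --- the map $v\mapsto X(u,v)$ is locally Lipschitz, uniformly in $u$ on compacts (this is Proposition~\ref{cor:TWSEbis} of the present paper; Stoev and Taqqu obtain a comparable statement via a.s.\ differentiability of the field in $v$). With that ingredient the Hurst contribution to the modulus is of full order $|t-t'|^{\gamma_H}$, and since $\gamma_H>1/\al>1-1/\al>\min_{t\in I}H(t)-1/\al$, it is always dominated by $|t-t'|^{\min_{t\in I}H(t)-1/\al}$. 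The Kolmogorov-type penalty then needs to be paid only on the spatial increment, and there one moreover needs the a.s.\ bound $\sup_{v}|X(u,v)-X(u',v)|\le C(\omega)|u-u'|^{\gamma}$ to hold \emph{uniformly} in the frozen index $v$ (since the frozen value $H(t)$ moves with $t$); this is again a statement about the two-parameter field, as in Proposition~\ref{main:prop} and Theorem~\ref{cor:holder}, not about the one-parameter process $Y$. Neither ingredient can be extracted from the bound $\ESP|Y(t)-Y(t')|^{p}\le C|t-t'|^{\eta p}$ alone, so as written your argument proves a strictly weaker theorem.
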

\noindent

Later, thanks to a different methodology, relying on wavelet series representations of the field $X$ and the corresponding LMSM $Y$, Theorem~\ref{thm:stoevtaqquH} has been improved in the article \citep{hamonier2012lmsm} in the following way: under condition $(\mathbf{A})$, for any $\eta>0$, one has

\begin{equation}
\label{eq3:modcont2Y}
\sup_{(t,s)\in I^2;\, t\neq s} \left\{\frac{\big|Y(t)-Y(s)\big|}{ |t-s|^{\min_{t\in I} H(t)-1/\alpha} \left(1+\big|\log |t-s| \big| \right)^{2/\alpha+\eta}}\right\} < \infty \qquad a.s.
\end{equation}

The article \citep{hamonier2012lmsm} has also shown that, under condition $(\mathbf{A})$, the upper estimate provided by (\ref{eq3:modcont2Y}) on sample paths behaviour of LMSM is quasi-optimal in the following sense: for any $\eta>0$, one has
\begin{equation}
\label{eq4:modcont2Y}
\sup_{(t,s)\in I^2;\, t\neq s}\left\{\frac{\big|Y(t)-Y(s)\big|}{ |t-s|^{\min_{t\in I} H(t)-1/\alpha} \left(1+\big|\log |t-s| \big| \right)^{-\eta}}\right\} = \infty \qquad a.s.
\end{equation}

Before finishing this brief presentation of previous results on path behaviour of LMSM, we mention that in their article \citep{Bierme13}, Bierm\'e and Lacaux have  improved, under condition $(\mathbf{A})$, the upper estimate provided by (\ref{eq3:modcont2Y}) on sample paths behaviour of LMSM, in the following way: one has 
\begin{equation}\label{eq5:modcont2Y}
\sup_{(t,s)\in I^2;\, t\neq s} \left\{\frac{\big|Y(t)-Y(s)\big|}{ |t-s|^{\min_{t\in I} H(t)-1/\alpha} \sqrt{1+|\log |t-s||}}\right\}  < \infty \qquad a.s.
\end{equation}

As a conclusion, in view of Relations~(\ref{eq3:modcont2Y}), (\ref{eq4:modcont2Y}) and (\ref{eq5:modcont2Y}), it is natural to address the following question: {\em  assuming that condition $(\mathbf{A})$ holds, do LMSM's sample paths belong to the critical H\"older space $\ce^{\min_{t\in I} H(t)- 1/\alpha}(I)$?} 

At first sight, one is tempted to believe that the answer to such a question is negative, since this is the case for many other examples of stochastic processes. For instance, it is well-known that despite the fact that Brownian paths satisfy, on each compact interval $I$, a H\"older condition of any order strictly less than $1/2$, they fail to belong to the critical H\"older space $\ce^{1/2}(I)$. 

Yet, we will show that for LMSM the answer to the question is positive (see Corollary~\ref{main:cor} given below). In fact, this will be a consequence of our main result that we are now going to state. 


\begin{thm}\label{main:result}
We denote by $I$ be a compact interval of $\R$, and we assume that the functional parameter $H(\cdot)$ of LMSM $Y$ satisfies:
$$
(\mathbf{A'}) \qquad H(I) \subset (1/\alpha;1) \mbox{ and } H(\cdot) \in \ce^{\gamma_H}(I) \mbox{ for some $\gamma_H\in [0,1)$.}
$$
Notice that condition $(\mathbf{A'})$ is more general than condition $(\mathbf{A})$.
Let $\Omega_0^*$ and $\Omega_1^*$ be the two events of probability~$1$ defined respectively in Lemma~\ref{omega0} and Proposition~\ref{omega1} given in the next section (it is clear that their intersection $\Omega_0^* \cap \Omega_1^*$ is also an event probability~$1$). For any $\omega\in\Omega_0^* \cap \Omega_1^*$, one has
\begin{equation}\label{eq1:main:result}
\sup_{(t,s)\in I^2;\, t\neq s} \left\{ \frac{|Y(t,\omega)-Y(s,\omega)|}{|t-s|^{\gamma_H \wedge \left(\min_{x\in I} H(x) -1/\al\right)}}    \right\} < \infty.
\end{equation}
\end{thm}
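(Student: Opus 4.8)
The plan is to reduce the bivariate increment of $Y$ to two one-variable estimates on the underlying field $X$. Since $Y(t)=X(t,H(t))$, I would write, for $(t,s)\in I^2$,
\[
Y(t)-Y(s)=\big(X(t,H(t))-X(s,H(t))\big)+\big(X(s,H(t))-X(s,H(s))\big),
\]
and bound the two brackets separately, aiming for $C|t-s|^{\gamma}$ with $\gamma=\gamma_H\wedge(\min_{x\in I}H(x)-1/\alpha)$. Because $I$ is compact and the paths are (on the relevant events) continuous, pairs with $|t-s|\ge 1$ contribute a finite supremum and are harmless; hence it suffices to treat $|t-s|\le 1$, where any bound $|t-s|^{\theta}$ with $\theta\ge\gamma$ is automatically $\le|t-s|^{\gamma}$. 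This is what will let me exploit the larger exponents $\min_{x\in I}H(x)-1/\alpha$ and $\gamma_H$ that arise naturally in each bracket.

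The two brackets correspond to the two directions of the field. The first freezes the regularity level $v=H(t)$ and moves the position $u$ from $s$ to $t$: there $X(\cdot,v)$ is a linear fractional stable motion of index $v$, whose pathwise H\"older exponent is $v-1/\alpha$. The second freezes the position $u=s$ and moves the regularity level $v$ from $H(s)$ to $H(t)$; since $H\in\ce^{\gamma_H}(I)$ one has $|H(t)-H(s)|\le c|t-s|^{\gamma_H}$, so this bracket is controlled once one knows that $v\mapsto X(s,v)$ is Lipschitz, uniformly in $s\in I$ and in $v$ ranging over a compact subinterval $[\underline H,\overline H]\subset(1/\alpha,1)$ containing $H(I)$.

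Thus I would first establish two pathwise uniform estimates for $X$ on the events of probability one, which is the content I would place in Lemma~\ref{omega0} and Proposition~\ref{omega1}. Working from the wavelet series representation of $X$, whose random coefficients are $\sas$ variables admitting uniform bounds (with at most logarithmic-in-scale weights) on an event of probability one, I would prove: (i) on $\Omega_0^*$, a uniform Lipschitz bound in the regularity variable, $\sup_{s\in I,\,v\in[\underline H,\overline H]}|\partial_v X(s,v)|<\infty$, obtained by differentiating the representation in $v$ (equivalently by directly estimating its $v$-increment), where the only new factors are polynomial in the scale and are absorbed by the geometric decay of the dyadic terms; and (ii) on $\Omega_1^*$, the sharp, logarithm-free modulus in the position variable, $\sup\big\{|X(u,v)-X(u',v)|/|u-u'|^{\,\underline H-1/\alpha}\big\}<\infty$ over $u,u'$ in a neighbourhood of $I$ and $v\in[\underline H,\overline H]$. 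Granting (i) and (ii), the first bracket is $\le C|t-s|^{\underline H-1/\alpha}\le C|t-s|^{\gamma}$ and the second is $\le C\,|H(t)-H(s)|\le C|t-s|^{\gamma_H}\le C|t-s|^{\gamma}$; summing and taking the supremum over $(t,s)$ yields \eqref{eq1:main:result}.

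The main obstacle is clearly ingredient (ii): removing the logarithmic factor present in the Bierm\'e--Lacaux estimate \eqref{eq5:modcont2Y}. A term-by-term dyadic summation of the wavelet series, splitting at the critical scale $j_0\approx\log_2(1/|u-u'|)$ and using the crude coefficient bounds carrying $(\log(3+j))^{1/\alpha}$ weights, reproduces exactly the spurious $\sqrt{1+|\log|u-u'||}$ loss, since the estimate concentrates at the crossover scale. To reach the critical exponent $\min_{x\in I}H(x)-1/\alpha$ one needs a finer analysis near $j_0$ that exploits the genuine (rather than worst-case) size of the coefficients together with the cancellation built into the wavelet increments, instead of bounding each scale in absolute value. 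By contrast, ingredient (i) is comparatively routine, because the $v$-direction is the smooth one and never approaches a critical exponent.
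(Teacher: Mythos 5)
Your reduction is sound and is in fact the same as the paper's: the increment of $Y$ is split into a $u$-increment at a frozen level $v$ and a $v$-increment at a frozen position; the latter is controlled by the uniform Lipschitz-in-$v$ estimate (this is Proposition~\ref{cor:TWSEbis}, already available from \citep{hamonier2012lmsm}) combined with $H(\cdot)\in\ce^{\gamma_H}(I)$, and the former requires a uniform, logarithm-free H\"older estimate of exponent $v-1/\alpha$ in the position variable. But your ingredient (ii) --- precisely that logarithm-free estimate --- is left unproved: you correctly identify it as the main obstacle and then only assert that ``a finer analysis near $j_0$'' exploiting ``the genuine size of the coefficients'' and ``cancellation built into the wavelet increments'' is needed. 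That estimate is the entire content of the paper (Propositions~\ref{main:prop}, \ref{prop:regul1} and \ref{prop:regul2}), so as it stands the proposal assumes what has to be proved. Moreover, the mechanism you hint at is not the one that works: no cancellation between or within scales is exploited; every term of the wavelet series is still bounded in absolute value.

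What actually closes the gap is a pathwise sharpening of the coefficient bound itself. One writes $X=\dot X+\ddot X$ (low/high frequency); $\dot X$ is $C^3$, hence Lipschitz, so only $\ddot X$ matters. For $j\ge 0$ the $k$-sum is split into the ``far'' indices $|k|2^{-j}>2(M+1)$, where the crude bound $|\epsilon_{j,k}|\le C(3+|j|)^{1/\alpha+\eta}(3+|k|)^{1/\alpha+\eta}$ from Lemma~\ref{omega0} (note: polynomial weights, not the logarithmic ones you quote) is compensated by the decay $(3+|2^ju-k|)^{-2}$ of $\Psi$, and the ``near'' indices $|k|2^{-j}\le 2(M+1)$, which are exactly the ones responsible for the loss at the crossover scale. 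For those, the keystone (Proposition~\ref{omega1}) is the estimate $\big|\epsilon_{j,k}(\omega)\big|\le C'(\omega)2^{j/\alpha}$, with no factor growing in $j$ or $k$ at all. It is obtained by an integration-by-parts identity (Lemma~\ref{prop:ipp}): $\epsilon_{j,k}=2^{j/\alpha}\int_{\R}\psi(2^js-k)\Za{ds}=-2^{j/\alpha}\int_{\R}\psi'(u)\Za{(u+k)2^{-j}}\,du$, so that when $|k|2^{-j}\le l$ the integrand only involves the L\'evy $\sas$ process $\{\Za{x}:x\in\R\}$ on the fixed compact set $|x|\le R+l$, where its c\`adl\`ag sample paths are almost surely bounded; $\Omega_1^*$ is the event on which this boundedness holds. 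Plugging this bound into the near-regime sums and splitting at the scale $j_0$ with $2^{-j_0}\approx|u_1-u_2|$ yields exactly $|u_1-u_2|^{v-1/\alpha}$, with no logarithm. Without this coefficient estimate (or an equivalent substitute) your step (ii), and hence the whole argument, does not go through.
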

Observe that, when the functional parameter $H(\cdot)$ of LMSM $Y$ satisfies condition~$(\mathbf{A})$, one can derive from Relation~(\ref{eq1:main:result}) and the inequalities $\gamma_H>1/\al>\min_{x\in I} H(x) -1/\al$ the following result.

\begin{corollary}
\label{main:cor}
Let $I$ be a compact interval of $\R$. We assume that Condition~$(\mathbf{A})$ holds for this interval. Then, the sample paths of LMSM $Y$ almost surely belong to the critical H\"older space $\ce^{\min_{t\in I} H(t) -1/\al}(I)$. 
\end{corollary}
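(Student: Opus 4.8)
The plan is to establish Theorem~\ref{main:result} by means of a wavelet series representation of the random field $X$, and then to transfer the resulting regularity estimate to the LMSM $Y$ via the substitution $v=H(t)$. The starting point is to expand $X(u,v)$ into the wavelet-type random series already used in \citep{hamonier2012lmsm}, obtained by decomposing the kernel $(u-s)_+^{v-1/\al}-(-s)_+^{v-1/\al}$ on a suitable (Lemarié--Meyer type) wavelet basis; this yields an almost surely convergent series whose random coefficients are $\sas$ integrals, and whose generic term is a smooth deterministic function of $(u,v)$. The events $\Omega_0^*$ and $\Omega_1^*$ of Lemma~\ref{omega0} and Proposition~\ref{omega1} are precisely the almost sure events on which the relevant low-frequency and high-frequency parts of this series admit the sharp, logarithm-free bounds needed below; so I would invoke these two results directly rather than reprove them.

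First I would split the increment $Y(t,\omega)-Y(s,\omega)=X(t,H(t),\omega)-X(s,H(s),\omega)$ using the triangle inequality into a spatial increment and a scale increment,
\begin{equation}\label{eq:split}
\big|X(t,H(t))-X(s,H(s))\big|\le \big|X(t,H(t))-X(s,H(t))\big|+\big|X(s,H(t))-X(s,H(s))\big|,
\end{equation}
so that the two terms can be controlled separately. For the first term on the right of \eqref{eq:split}, I would bound the increment of $X(\cdot,v)$ in its first variable, uniformly for $v$ in the compact range $H(I)\subset(1/\al;1)$; the wavelet estimates on $\Omega_0^*\cap\Omega_1^*$ should give a bound of the form $C|t-s|^{\min_{x\in I}H(x)-1/\al}$, with no logarithmic correction, which is exactly the gain over \eqref{eq3:modcont2Y} and \eqref{eq5:modcont2Y}. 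For the second term, I would exploit the regularity of $X$ in its second variable $v$ together with the H\"older continuity $H(\cdot)\in\ce^{\gamma_H}(I)$: since $|H(t)-H(s)|\le C|t-s|^{\gamma_H}$, an estimate on $|\partial_v X|$ (or on the increment in $v$) of $X$ over the compact parameter region, again furnished on $\Omega_0^*\cap\Omega_1^*$ by the series representation, would yield a bound of order $|t-s|^{\gamma_H}$. Combining the two contributions produces the exponent $\gamma_H\wedge\big(\min_{x\in I}H(x)-1/\al\big)$ asserted in \eqref{eq1:main:result}.

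The main obstacle is obtaining the first, spatial, bound \emph{without} the extraneous $\sqrt{1+|\log|t-s||}$ factor present in \eqref{eq5:modcont2Y}. This is where the delicate analysis concentrated in Lemma~\ref{omega0} and Proposition~\ref{omega1} is essential: one must control, uniformly over the dyadic scales, the high-frequency part of the wavelet series and show that the worst-case accumulation across scales sums to a clean power of $|t-s|$ rather than to a power times a logarithm. Concretely, the random wavelet coefficients are heavy-tailed ($\al$-stable), so the uniform maximal bounds over the relevant lattice of coefficients must be sharp enough that the number of scales contributing to a given increment length $|t-s|$ does not introduce a surviving logarithmic term; this requires the finer almost sure control quantified on $\Omega_1^*$. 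Once these two technical inputs are in hand, the rest of the argument --- the decomposition \eqref{eq:split}, the uniformity in $v$ over the compact set $H(I)$, and the use of $\gamma_H$-H\"older continuity of $H(\cdot)$ --- is routine, and Corollary~\ref{main:cor} then follows immediately by specializing to condition $(\mathbf{A})$ and using $\gamma_H>1/\al>\min_{x\in I}H(x)-1/\al$.
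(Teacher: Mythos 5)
Your proposal is correct and follows essentially the same route as the paper: you split the increment of $Y$ into a spatial increment of $X$ at fixed $v$ (bounded on $\Omega_0^*\cap\Omega_1^*$ by $C|t-s|^{\min_{x\in I}H(x)-1/\al}$ with no logarithm, the sharpened coefficient estimate of Proposition~\ref{omega1} being precisely what removes the log from the high-frequency part) and a $v$-increment controlled by the Lipschitz-in-$v$ bound (Proposition~\ref{cor:TWSEbis}) together with the $\gamma_H$-H\"older continuity of $H(\cdot)$, and you then conclude exactly as the paper does, via the exponent $\gamma_H \wedge \left(\min_{x\in I}H(x)-1/\al\right)$ and the inequality $\gamma_H>1/\al>\min_{x\in I}H(x)-1/\al$ under condition $(\mathbf{A})$. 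One small correction that does not affect your argument (since you invoke the key results as given): the paper's representation uses a compactly supported Daubechies wavelet rather than a Lemari\'e--Meyer one, and compact support is genuinely needed in the proof of Proposition~\ref{omega1}, which rests on the integration-by-parts Lemma~\ref{prop:ipp} for compactly supported functions and on controlling the L\'evy $\sas$ process on a bounded interval.
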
 

\begin{rem}
In the special case of LFSM (i.e. $H(\cdot)$ is a constant function denoted by $H$), Corollary~\ref{main:cor} has already been obtained by Takashima in \citep{Tak89}. 
\end{rem}


The rest of the article is organized in the following way. In Section~\ref{sect:metho}, first one makes some recalls related with the wavelet series representation of the field $X$, since it provides the main tool for proving Theorem~\ref{main:result}. Then one describes the strategy employed in order to get this theorem, and one states the intermediate results which are needed. The  proofs of the new results among them are given in Section~\ref{sect:proofs}.

\section{Methodology}\label{sect:metho}

%
Let $X:=\big\{X(u,v):(u,v)\in \R\times (1/\al;1)\big\}$ be the real-valued $\sas$ random field which has been defined in (\ref{def:champX}) and has been associated to LMSM $Y$ through~(\ref{m:def:lmsm}). The main tool for proving Theorem~\ref{main:result} is the wavelet series representation of $X$ which was introduced  in \citep{hamonier2012lmsm}. First, we will make some brief recalls related with this representation of $X$. To this end, we denote by  $\psi:\R\rightarrow\R$ a 3 times continuously differentiable compactly supported Daubechies mother wavelet which generates an orthonormal basis of $L^2 (\R)$. We mention in passing that three very classical references on the wavelet theory are the books \citep{Dau92,Meyer90,Meyer92}. 

The following theorem, which has been obtained in \citep{hamonier2012lmsm}, provides the wavelet series representation of $X$.

\begin{thm}\citep{hamonier2012lmsm}
\label{thm:wavrepX}.
Let $\Psi:\R\times (1/\al;1)\rightarrow \R$ be the function defined, for all $(u,v)\in \R \times (1/\alpha;1)$, as
\begin{equation*}\label{daub:PSI}
\Psi(x,v):=\int_{\R} (x-s)_{+}^{v-1/\alpha} \psi(s) ds\,.
\end{equation*}
Let $\lbrace \epsilon_{j,k} : (j,k)\in\Z^2 \rbrace$ be the sequence of the identically distributed $\sas$ real-valued random variables defined, for every $(j,k)\in\Z^2$, 
as
\begin{equation}
\label{daubdef:ejk}
\epsilon_{j,k} := 2^{j/\alpha}\int_{\R}  \psi(2^js-k) \Za{ds}.
\end{equation}
Then, for any $(u,v)\in \R \times (1/\alpha;1)$, the random variable $X(u,v)$ can be, almost surely, expressed as
\begin{equation}
\label{eq1:descrip}
X(u,v)=\sum_{(j,k)\in \Z^2} 2^{-jv}\epsilon_{j,k}\big(\Psi (2^j u-k,v)-\Psi (-k,v)\big), 
\end{equation}
where the series is on $\Omega_0^*$, the event of probability $1$ introduced in Lemma~\ref{omega0} stated below, uniformly convergent in $(u,v)$, on any compact subset of $\R \times (1/\alpha;1)$.
\end{thm}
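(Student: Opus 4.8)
The plan is to obtain the representation by expanding, for each fixed $(u,v)$, the deterministic kernel
\[
g_{u,v}(s):=(u-s)_+^{v-1/\al}-(-s)_+^{v-1/\al}
\]
on the orthonormal wavelet basis $\big\{2^{j/2}\psi(2^j\cdot-k):(j,k)\in\Z^2\big\}$ of $L^2(\R)$, and then to integrate this expansion against the $\sas$ random measure $\Za{ds}$. First I would check that $g_{u,v}\in L^2(\R)\cap\La(\R)$: the map $g_{u,v}$ is bounded and continuous because $v-1/\al>0$, it vanishes for $s>\max(u,0)$, and a mean-value argument shows it decays like $|s|^{v-1/\al-1}$ as $s\to-\ii$, which is both $L^2$- and $\La$-integrable since $v<1$. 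Performing the change of variable $s\mapsto 2^{-j}(t+k)$ in the inner product $\big\langle g_{u,v},2^{j/2}\psi(2^j\cdot-k)\big\rangle$ and using the homogeneity $(2^{-j}x)_+^{v-1/\al}=2^{-j(v-1/\al)}(x)_+^{v-1/\al}$ identifies the wavelet coefficient of $g_{u,v}$ as $2^{-j/2-jv+j/\al}\big(\Psi(2^ju-k,v)-\Psi(-k,v)\big)$.

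The main obstacle is the almost sure uniform convergence of the candidate series
\[
\sum_{(j,k)\in\Z^2}2^{-jv}\epsilon_{j,k}\big(\Psi(2^ju-k,v)-\Psi(-k,v)\big)
\]
on compact subsets of $\R\times(1/\al;1)$, which simultaneously constructs $\Omega_0^*$. I would combine two ingredients. On the probabilistic side, each $\epsilon_{j,k}$ being $\sas$ has polynomially decaying tails, so for $\delta>0$ the thresholds $(2+|j|)^{(1+\delta)/\al}(2+|k|)^{(1+\delta)/\al}$ make $\sum_{(j,k)}\PR\big(|\epsilon_{j,k}|>c\,(2+|j|)^{(1+\delta)/\al}(2+|k|)^{(1+\delta)/\al}\big)$ finite; Borel--Cantelli then yields an event $\Omega_0^*$ of probability $1$ and a finite random variable $C$ with $|\epsilon_{j,k}|\le C\,(2+|j|)^{(1+\delta)/\al}(2+|k|)^{(1+\delta)/\al}$ for all $(j,k)$. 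On the deterministic side, the compact support, the $\ce^3$ smoothness and the vanishing moments of $\psi$ give fast polynomial decay of $\Psi(x,v)$ and of $\partial_x\Psi(x,v)$ as $x\to+\ii$, together with $\Psi(x,v)=0$ for $x$ sufficiently negative, all uniformly in $v$ on compacts. Splitting the series at $j=0$: for $j\ge0$ the $k$-summation is controlled by the decay of $\Psi$ against the polynomial growth of the $\epsilon_{j,k}$ bound, leaving a geometric factor $2^{j((1+\delta)/\al-v)}$, and here it is crucial that on a compact subset $v$ stays bounded away from $1/\al$, so $\delta$ can be chosen small enough to make this factor summable; for $j<0$ the increment estimate $\big|\Psi(2^ju-k,v)-\Psi(-k,v)\big|\lesssim|2^ju|\,\sup_x|\partial_x\Psi(x,v)|$ produces a compensating factor $2^{j(1-v)}$ that decays as $j\to-\ii$ because $v<1$. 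Summing the resulting genuinely deterministic majorants, which converge uniformly for $(u,v)$ in any compact set, gives normal convergence and hence continuity of the limit field on $\Omega_0^*$.

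It then remains to identify, for each fixed $(u,v)$, this almost sure limit with $X(u,v)$. For this I would use that the stochastic integral $g\mapsto\int_\R g(s)\,\Za{ds}$ is continuous from $\La(\R)$ into the space of random variables equipped with convergence in probability. Since the Daubechies basis is unconditional in $\La(\R)$ for $\al\in(1;2)$, the partial sums of the wavelet expansion of $g_{u,v}$ converge to $g_{u,v}$ in $\La(\R)$, so their images under the integral converge in probability to $X(u,v)$. By linearity of the integral on finite sums and the identity $\int_\R\psi(2^js-k)\,\Za{ds}=2^{-j/\al}\epsilon_{j,k}$, the coefficient $2^{-j/2-jv+j/\al}$ multiplies $2^{j/2-j/\al}\epsilon_{j,k}$ to give exactly $2^{-jv}\epsilon_{j,k}$, so these images are precisely the finite partial sums of the candidate series; as the latter already converge almost surely by the previous step, uniqueness of limits in probability forces the almost sure equality. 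The delicate points throughout are the quantitative, $v$-uniform decay estimates for $\Psi$ and its increments, and checking that the Borel--Cantelli thresholds on $\epsilon_{j,k}$ are compatible with these rates so that both the $j\ge0$ and $j<0$ tails remain summable on every compact set.
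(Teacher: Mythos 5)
Your proposal is correct and takes essentially the same route as the proof recalled from \citep{hamonier2012lmsm}: the Borel--Cantelli polynomial bound on the $\epsilon_{j,k}$ and the uniform-in-$v$ localization of $\Psi$ that you derive are precisely the two ingredients the paper isolates as Lemma~\ref{omega0} and Lemma~\ref{localisation}, and your identification of the almost sure limit with $X(u,v)$ via $\La(\R)$-convergence of the wavelet expansion and continuity in probability of the stable integral is the same final step. The only routine tidy-up is to make $\Omega_0^*$ independent of the parameter $\delta$ by intersecting the corresponding events over a countable sequence $\delta_n \downarrow 0$.
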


\noindent From now on the field $X$ is identified with its modification provided by (\ref{eq1:descrip}). We recall that the following two lemmas, which have respectively been derived in \cite{hamonier2012lmsm} and \cite{ayache2009linear}, are the two main ingredients of the proof of Theorem~\ref{thm:wavrepX}.

\begin{lemma}{\citep[]{hamonier2012lmsm}}\label{localisation}  
For all $(p,q) \in \{ 0,1,2,3 \}\times\Z_+$, the partial derivative $\partial_x^p \partial_v^q \Psi$ exists and is a continuous function on $\R \times (1/\alpha;1)$, with the usual convention that $\partial_x^0 \partial_v^0 \Psi:=\Psi$. Moreover, all of these functions ($\Psi$ and its existing partial derivatives) are well-localized in $x$ uniformly $v\in [a;b]$, that is one has: 
\begin{equation}
\label{eq:localisation}
\forall\, (p,q) \in \{ 0,1,2,3 \}\times\Z_+, \quad \sup_{(x,v)\in\R\times [a;b]}\Big\{ (3+|x|)^2 \big|(\partial_x^p \partial_v^q \Psi)(x,v)\big|\Big\}< \infty,
\end{equation}
where $[a;b]$ denotes an arbitrary nonempty compact subinterval of $(1/\alpha;1)$.
\end{lemma}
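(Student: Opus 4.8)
The plan is to first recast $\Psi$ in a form where differentiation is transparent, and then to isolate the decay estimate~(\ref{eq:localisation}), which is the only genuinely delicate point. Writing $\kappa := v - 1/\al$ and noting that $\kappa$ ranges over $(0, 1 - 1/\al) \subset (0,1)$, I would substitute $s \mapsto x - s$ in the defining integral to obtain
\[
\Psi(x,v) = \int_0^{+\infty} s^{\kappa}\, \psi(x - s)\, ds .
\]
In this representation the entire $x$-dependence sits in the factor $\psi(x-s)$, which is three times continuously differentiable, while the $v$-dependence sits in $s^{\kappa} = e^{\kappa \log s}$, which is $\ceinf$ in $\kappa$ for $s > 0$. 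Since $\supp{\psi} \subseteq [A,B]$ for some $A < B$, for each fixed $x$ the integrand is supported in the bounded $s$-set $[\max(0, x-B), x-A]$, and near $s = 0$ the factor $s^{\kappa}|\log s|^q$ is integrable because $\kappa > 0$. I would therefore justify differentiation under the integral sign by dominated convergence, with a dominating function uniform for $(x,v)$ in a neighbourhood of any given point, and conclude that for $(p,q) \in \{0,1,2,3\} \times \Z_+$
\[
\partial_x^p \partial_v^q \Psi(x,v) = \int_0^{+\infty} s^{\kappa}(\log s)^q\, \psi^{(p)}(x-s)\, ds = \int_{\R} (x-s)_+^{\kappa}\big(\log (x-s)_+\big)^q\, \psi^{(p)}(s)\, ds ,
\]
these derivatives existing and being jointly continuous in $(x,v)$ by the same domination.

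For the decay estimate I would split according to the size of $x$. If $x \le A$, then $(x-s)_+ = 0$ on $\supp{\psi}$, so $\partial_x^p \partial_v^q \Psi$ vanishes identically. On the bounded range $A \le x \le 2B$ the function $\partial_x^p \partial_v^q \Psi$ is continuous, hence bounded, and $(3+|x|)^2$ is bounded there as well, so the product is controlled. The substance is the range $x > 2B$, where $x - s \ge x/2 > 0$ for every $s \in [A,B]$, so that the integrand is smooth in $s$ and I may exploit cancellation from the wavelet.

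For $x > 2B$ I would set $g(s) := (x-s)^{\kappa}\big(\log(x-s)\big)^q$ and Taylor expand it about $s = 0$ to second order, writing $g(s) = \sum_{j=0}^{2} \frac{g^{(j)}(0)}{j!} s^j + \frac{g^{(3)}(\xi_s)}{6} s^3$. The point is that the three leading terms drop out after integration against $\psi^{(p)}$: one has $\int_{\R} s^j \psi^{(p)}(s)\, ds = 0$ for $j \in \{0,1,2\}$, because for $j < p$ this is automatic (the integrand is a derivative of a compactly supported function) and for $j \ge p$ it reduces, after $p$ integrations by parts, to a multiple of $\int_{\R} s^{j-p}\psi(s)\,ds$, which vanishes since the $\cedeux$-smooth, in fact $C^3$, Daubechies wavelet $\psi$ has at least three vanishing moments. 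Only the remainder survives, and a direct estimate of $g^{(3)}$ (each $s$-differentiation lowers the power of $x-s$ by one while keeping at most $q$ logarithmic factors, and $x - s \ge x/2$) gives $|g^{(3)}(\xi_s)| \le C x^{\kappa - 3}(\log x)^q$ uniformly in $s \in [A,B]$ and in $\kappa$ on a compact subinterval. Hence $|\partial_x^p \partial_v^q \Psi(x,v)| \le C' x^{\kappa - 3}(\log x)^q$, and multiplying by $(3+|x|)^2 \asymp x^2$ leaves $x^{\kappa - 1}(\log x)^q$, which stays bounded as $x \to +\infty$ because $\kappa \le b - 1/\al < 1$. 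Taking the supremum over the three ranges, and noting that all constants can be chosen uniformly for $v \in [a,b]$ (equivalently $\kappa$ in the compact interval $[a - 1/\al, b - 1/\al]$), yields~(\ref{eq:localisation}).

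The main obstacle is the decay step. A naive moment count suggests that two vanishing moments should produce quadratic decay, but this is misleading: the homogeneity factor $(x-s)^{\kappa} \asymp x^{\kappa}$ inflates the bound, so only $x^{\kappa - m}$ decay follows from $m$ cancellations, and $m = 2$ gives $x^{\kappa - 2}$, which is weaker than $x^{-2}$ since $\kappa > 0$. One must therefore use a third vanishing moment to reach $x^{\kappa - 3}$, leaving the strict surplus $1 - \kappa > 0$ needed to absorb the logarithmic factors coming from differentiation in $v$ and to beat $(3+|x|)^{-2}$ with room to spare; checking that the relevant bounds (the cancellation of the moments of $\psi^{(p)}$ and the Taylor remainder) are uniform over $v$ in the compact interval $[a,b]$ is the remaining point requiring care.
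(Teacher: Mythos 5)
Your proof is correct; note that the paper gives no proof of Lemma~\ref{localisation} at all (it is recalled, with citation, from \citep{hamonier2012lmsm}), and your argument --- the substitution $s\mapsto x-s$, differentiation under the integral sign, and, for $x$ large, a third-order Taylor expansion of $(x-s)^{v-1/\alpha}\big(\log (x-s)\big)^{q}$ integrated against $\psi^{(p)}$, whose moments of order $0,1,2$ vanish because a compactly supported orthonormal wavelet of class $\ce^{3}$ has at least three (in fact four) vanishing moments --- is the standard route to such wavelet localization estimates and is essentially the argument of the cited reference. You also correctly isolated the critical quantitative point, namely that two cancellations give only $O\big(x^{v-1/\alpha-2}\big)$, whose product with $(3+|x|)^{2}$ is unbounded, so the third vanishing moment is indispensable; the one trivial repair is that your case splitting ($x\le A$, $A\le x\le 2B$, $x>2B$) tacitly assumes $B>0$, which is harmless to fix (e.g.\ replace the cutoff $2B$ by $2|B|+1$).
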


\begin{lemma}{\citep[]{ayache2009linear}}\label{omega0}
There exists an event of probability $1$, denoted by $\Omega_0^*$, such that, for every fixed real number $\eta>0$, one has, for all $\omega\in\Omega_0^*$ and for each $(j,k)\in\Z^2$,
\begin{equation}
\label{eq1:omega0}
\big|\epsilon_{j,k}(\o)\big| \le C(\o)\big(3+|j| \big)^{1/\alpha+\eta}\big(3+|k| \big)^{1/\alpha+\eta}, 
\end{equation}
where $C$ is a positive and finite random variable only depending on $\eta$.
\end{lemma}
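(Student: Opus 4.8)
The plan is to prove this by a direct Borel--Cantelli argument exploiting the heavy-tail behaviour of symmetric $\alpha$-stable laws, the point being that the $\epsilon_{j,k}$ all share the same distribution so that a single tail bound holds uniformly in $(j,k)$. First I would record the relevant distributional fact: since $\Za{ds}$ has Lebesgue control measure, the stochastic integral $\int_{\R}f(s)\,\Za{ds}$ is $\sas$ with scale parameter $\|f\|_{L^{\alpha}}$. Applying this to $f(s)=2^{j/\alpha}\psi(2^{j}s-k)$ and using the change of variable $u=2^{j}s-k$ shows that the scale parameter of $\epsilon_{j,k}$ equals $\|\psi\|_{L^{\alpha}}$ for every $(j,k)$, confirming that the $\epsilon_{j,k}$ are identically distributed. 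By the classical power-law tail estimate for stable variables (see \citep{SamTaq}), there is then a finite constant $K>0$, independent of $(j,k)$, such that
\[
\PR\big(|\epsilon_{j,k}|>\lambda\big)\le K\lambda^{-\alpha}\qquad\text{for all }\lambda>0 .
\]

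Next, fixing $\eta>0$ and setting $\lambda_{j,k}:=(3+|j|)^{1/\alpha+\eta}(3+|k|)^{1/\alpha+\eta}$, I would introduce the events $A_{j,k}:=\{|\epsilon_{j,k}|>\lambda_{j,k}\}$. The tail estimate gives
\[
\sum_{(j,k)\in\Z^{2}}\PR(A_{j,k})\le K\sum_{(j,k)\in\Z^{2}}(3+|j|)^{-(1+\alpha\eta)}(3+|k|)^{-(1+\alpha\eta)}
=K\Big(\sum_{j\in\Z}(3+|j|)^{-(1+\alpha\eta)}\Big)^{2},
\]
and the one-dimensional series converges because the exponent $1+\alpha\eta$ strictly exceeds $1$; this is precisely where the strict positivity of $\eta$ is used. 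The Borel--Cantelli lemma then yields, for this fixed $\eta$, a probability-one event outside of which only finitely many $A_{j,k}$ occur.

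To obtain a single event $\Omega_0^*$ valid for every $\eta>0$, I would run the previous step along the sequence $\eta_n:=1/n$, producing full-measure events $\Omega_n$, and set $\Omega_0^*:=\bigcap_{n\ge1}\Omega_n$, which still has probability $1$. Given an arbitrary $\eta>0$, choose $n$ with $1/n\le\eta$; since $3+|j|\ge3>1$ and $3+|k|\ge3>1$, the threshold $(3+|j|)^{1/\alpha+\eta}(3+|k|)^{1/\alpha+\eta}$ dominates the one built with $\eta_n$, so the conclusion obtained on $\Omega_n$ transfers to the exponent $1/\alpha+\eta$. On $\Omega_0^*$ only finitely many indices thus violate $|\epsilon_{j,k}|\le\lambda_{j,k}$, and setting
\[
C(\omega):=\max\Big(1,\ \max_{(j,k)\in F(\omega)}\frac{|\epsilon_{j,k}(\omega)|}{\lambda_{j,k}}\Big),
\]
where $F(\omega)$ denotes the finite exceptional set, produces a positive finite random variable depending only on $\eta$ for which (\ref{eq1:omega0}) holds for every $(j,k)\in\Z^{2}$.

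The argument is essentially routine, and I do not expect a genuine obstacle. The only substantive ingredient is the uniform power-law tail bound, which rests on the identical distribution of the $\epsilon_{j,k}$ together with the known heavy-tail asymptotics of $\sas$ variables. The one point requiring minor care is packaging the countable family of Borel--Cantelli conclusions into a single probability-one event serving all $\eta>0$ simultaneously, which is handled by the monotonicity-in-$\eta$ remark above.
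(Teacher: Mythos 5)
Your proof is correct: the identical distribution of the $\epsilon_{j,k}$ with common scale parameter $\|\psi\|_{L^{\alpha}(\R)}$ (via the change of variable $u=2^{j}s-k$), the uniform power-law tail bound $\PR(|\epsilon_{j,k}|>\lambda)\le K\lambda^{-\alpha}$ for $\sas$ variables, the Borel--Cantelli argument along the sequence $\eta_n=1/n$, and the monotonicity in $\eta$ (valid since $3+|j|\ge 3>1$) fit together without any gap, and the resulting $C(\omega)$ is indeed finite, positive, and independent of $(j,k)$. Note that the paper itself gives no proof of this lemma---it is imported verbatim from \citep{ayache2009linear}---and the proof in that reference is precisely this tail-estimate-plus-Borel--Cantelli argument, so your approach coincides with the source's.
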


\begin{rem}
\label{rem:contX}
In view of Lemma~\ref{localisation} and of the uniform convergence property of series in (\ref{eq1:descrip}), for each fixed $\o\in\Omega_0^*$, the sample path 
$(u,v)\mapsto X(u,v,\o)$ is a continuous function on $\R\times (1/\al;1)$. 
\end{rem}

Before  finishing our recalls, let us give a useful proposition which was obtained in \citep{{hamonier2012lmsm}} thanks to Lemmas~\ref{localisation} and \ref{omega0}.
\begin{proposition}\citep[]{hamonier2012lmsm}
\label{cor:TWSEbis}
For each fixed $\o\in \Omega_{0}^*$ and real numbers $M,a,b$ satisfying $M>0$ and $1/\al <a<b<1$, one has (with the convention that $0/0=0$)
\begin{equation}
\label{eq2:CTWSEbis}
\sup_{(u,v_1,v_2) \in [-M;M]\times [a;b]^2} \left\{\frac{\big| X(u,v_1,\omega) - X(u,v_2,\omega)\big|}{ |v_1-v_2|}\right\}  < \infty.
\end{equation}
\end{proposition}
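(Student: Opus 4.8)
The plan is to start from the wavelet series representation of $X$ furnished by Theorem~\ref{thm:wavrepX} and to reduce the claimed Lipschitz estimate in the variable $v$ to a uniform bound on the partial derivative $\partial_v X$. More precisely, writing
$$
g_{j,k}(u,v):=2^{-jv}\big(\Psi(2^j u-k,v)-\Psi(-k,v)\big),
$$
so that $X(u,v,\o)=\sum_{(j,k)\in\Z^2}\epsilon_{j,k}(\o)\,g_{j,k}(u,v)$ on $\Omega_0^*$, I would show that the formally $v$-differentiated series $\sum_{(j,k)}\epsilon_{j,k}(\o)\,\partial_v g_{j,k}(u,v)$ converges uniformly on the compact set $[-M;M]\times[a;b]$. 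Once this is established, a classical theorem on differentiation of series allows one to differentiate $X(\cdot,\cdot,\o)$ term by term with respect to $v$, so that $\partial_v X(u,v,\o)$ exists and is bounded on $[-M;M]\times[a;b]$; the mean value theorem then yields (\ref{eq2:CTWSEbis}) at once, with the supremum bounded by $\sup|\partial_v X|$.

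The next step is to compute and estimate the generic differentiated term. Differentiating gives
$$
\partial_v g_{j,k}(u,v)=-j(\log 2)\,2^{-jv}\big(\Psi(2^j u-k,v)-\Psi(-k,v)\big)+2^{-jv}\big((\partial_v\Psi)(2^j u-k,v)-(\partial_v\Psi)(-k,v)\big),
$$
which involves $\Psi$ and $\partial_v\Psi$, both controlled by the localization estimate (\ref{eq:localisation}) of Lemma~\ref{localisation}. I would then split the double sum into the low-frequency part $j\le 0$ and the high-frequency part $j\ge 1$, and estimate each using (\ref{eq:localisation}) together with the coefficient bound (\ref{eq1:omega0}) of Lemma~\ref{omega0}, in which $\eta>0$ is chosen once and for all small enough that $a>1/\al+\eta$ (possible precisely because $a>1/\al$).

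For the low-frequency part $j\le 0$, since $|2^j u|\le M$, I would apply the mean value theorem in the variable $x$ to both differences $\Psi(2^j u-k,v)-\Psi(-k,v)$ and $(\partial_v\Psi)(2^j u-k,v)-(\partial_v\Psi)(-k,v)$, producing a factor $2^j|u|$ times a value of $\partial_x\Psi$ or $\partial_x\partial_v\Psi$ at an intermediate point lying within distance $M$ of $-k$; by (\ref{eq:localisation}) this value is $\le C_M(3+|k|)^{-2}$. The prefactor then becomes $2^{-jv}2^{j}=2^{j(1-v)}\le 2^{-|j|(1-b)}$, which decays geometrically in $|j|$ and absorbs the polynomial factors $|j|$ and $(3+|j|)^{1/\al+\eta}$; summing $(3+|k|)^{1/\al+\eta-2}$ over $k$ converges for $\eta<1-1/\al$. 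For the high-frequency part $j\ge 1$, I would instead bound each of $\Psi(2^j u-k,v)$ and $\Psi(-k,v)$ separately by (\ref{eq:localisation}), and likewise for $\partial_v\Psi$, using the decay $2^{-jv}\le 2^{-ja}$. The sum over $k$ of $(3+|k|)^{1/\al+\eta}(3+|2^j u-k|)^{-2}$ is, up to a constant, of order $(3+2^jM)^{1/\al+\eta}\lesssim 2^{j(1/\al+\eta)}$, so that each high-frequency dyadic block is controlled by $2^{j(1/\al+\eta-a)}$ times a polynomial in $j$, which is summable thanks to the choice $a>1/\al+\eta$.

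The main obstacle I anticipate is precisely this high-frequency regime: there is a genuine competition between the geometric decay $2^{-ja}$ and the polynomial-in-$2^j$ growth $2^{j(1/\al+\eta)}$ produced by summing the coefficient bound (\ref{eq1:omega0}) over $k$ near the localization centre $k\approx 2^j u$. The estimate survives only because the lower endpoint $a$ of the parameter interval satisfies $a>1/\al$, which leaves room to take $\eta$ small enough to keep $1/\al+\eta-a<0$; this quantitative margin is the heart of the argument. The remaining care needed is bookkeeping: handling the two localization scales $(3+|k|)$ and $(3+|2^j u-k|)$ uniformly in $u\in[-M;M]$ and $v\in[a;b]$, and checking that the mean value theorem is applied only to partial derivatives whose continuity and localization are guaranteed by Lemma~\ref{localisation}, namely $\partial_x\Psi$ and $\partial_x\partial_v\Psi$, corresponding to $(p,q)\in\{1\}\times\{0,1\}$.
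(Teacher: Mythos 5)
Your proof is correct, and it takes essentially the route the paper itself points to: Proposition~\ref{cor:TWSEbis} is not proved in this article but quoted from \citep{hamonier2012lmsm}, where, as the paper explicitly notes, it is obtained from the wavelet series representation together with Lemmas~\ref{localisation} and~\ref{omega0}. Your argument --- term-by-term differentiation in $v$ justified by normal convergence of the differentiated series, with the mean value theorem in $x$ taming the low frequencies $j\le 0$ and the margin $a>1/\alpha+\eta$ taming the high frequencies --- is precisely that method, with the exponent bookkeeping done correctly.
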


Having finished our recalls related with the wavelet series representation of $X$, let us now turns to description of the strategy allowing to derive Theorem~\ref{main:result}. First, we point out that this theorem is mainly a consequence of the following more general result.
\begin{thm}\label{cor:holder}
Let $\Omega_0^*$ be the event of probability~$1$ which has already been introduced in Lemma~\ref{omega0}. Let $\Omega_1^*$ be the event of probability~$1$ which will be introduced in Proposition~\ref{omega1} given below. 
For all fixed $\omega\in \Omega_0^* \cap \Omega_1^*$ and real numbers $M,a,b$ satisfying $M>0$ and $1/\alpha<a<b<1$, one has (with the convention that $0/0=0$)
\begin{equation}\label{eq:cor:holder}
\sup_{(u_1,u_2,v_1,v_2)\in [-M;M]^2\times [a;b]^2}\left\lbrace \frac{|X(u_1,v_1,\omega)-X(u_2,v_2,\omega)|}{|u_1-u_2|^{v_1\vee v_2 -1/\alpha} + |v_1-v_2|} \right\rbrace <+\infty\,,
\end{equation}
where $v_1\vee v_2:=\max (v_1,v_2)$.
\end{thm}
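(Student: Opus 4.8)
The plan is to reduce this joint estimate to two one-variable estimates: a Lipschitz bound in the regularity variable $v$, which is already available, and a sharp Hölder bound in the space variable $u$ at a frozen value of $v$, which is the genuinely new ingredient. The key to obtaining the mixed exponent $v_1\vee v_2$ appearing in the denominator is to linearise the increment by routing through the \emph{larger} of the two regularity values. Concretely, when $v_1\ge v_2$ I would write
\[
X(u_1,v_1,\omega)-X(u_2,v_2,\omega)=\big(X(u_1,v_1,\omega)-X(u_2,v_1,\omega)\big)+\big(X(u_2,v_1,\omega)-X(u_2,v_2,\omega)\big),
\]
and symmetrically, when $v_2>v_1$, route through the intermediate point $(u_1,v_2)$. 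In either case the $v$-increment bracket (at a frozen $u$) is controlled by Proposition~\ref{cor:TWSEbis}, giving $\le C\,|v_1-v_2|$, i.e. $C$ times the denominator; while the $u$-increment bracket is taken precisely at $v=v_1\vee v_2$, so that the target spatial power $|u_1-u_2|^{v_1\vee v_2-1/\alpha}$ is exactly the one that appears in \eqref{eq:cor:holder}.

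The whole problem then collapses to the sharp spatial estimate
\[
\sup_{v\in[a;b]}\ \sup_{(u_1,u_2)\in[-M;M]^2;\,u_1\ne u_2}\left\{\frac{|X(u_1,v,\omega)-X(u_2,v,\omega)|}{|u_1-u_2|^{\,v-1/\alpha}}\right\}<+\infty,
\]
with a constant \textbf{uniform} in $v\in[a;b]$; adding it to the $v$-bound yields the numerator $\le C\big(|u_1-u_2|^{v_1\vee v_2-1/\alpha}+|v_1-v_2|\big)$ and hence the theorem. Since, for each fixed $\omega\in\Omega_0^*$, the map $(u,v)\mapsto X(u,v,\omega)$ is continuous (Remark~\ref{rem:contX}), it is bounded on the compact set $[-M;M]\times[a;b]$, and the positivity of $v-1/\alpha\ge a-1/\alpha>0$ makes the quotient automatically bounded when $|u_1-u_2|$ stays bounded below. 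So the spatial sup is finite as soon as the quotient is controlled for small $|u_1-u_2|$.

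To treat that regime I would work from the wavelet representation \eqref{eq1:descrip}, writing the space-increment at fixed $v$ as $\sum_{j,k}2^{-jv}\epsilon_{j,k}(\omega)\big(\Psi(2^j u_1-k,v)-\Psi(2^j u_2-k,v)\big)$ and splitting the scale sum at $j_1\simeq\log_2(1/|u_1-u_2|)$. For the coarse scales $j\le j_1$ I would use the mean value theorem together with the existence and uniform-in-$v$ localisation of $\partial_x\Psi$ from Lemma~\ref{localisation}, which replaces the increment by $2^j|u_1-u_2|\,\partial_x\Psi$ and renders the $k$-summation convergent; for the fine scales $j>j_1$ I would bound the two terms separately using the localisation of $\Psi$ itself. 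Because $v-1/\alpha>0$, both resulting geometric sums in $j$ are dominated by their boundary term at $j=j_1$, which is exactly what produces the power $|u_1-u_2|^{v-1/\alpha}$, uniformly in $v$ since all localisation constants in Lemma~\ref{localisation} are uniform over $[a;b]$.

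The main obstacle is the exactness of the critical exponent. Controlling the coefficients $\epsilon_{j,k}(\omega)$ through the crude per-coefficient bound of Lemma~\ref{omega0} forces an extra factor of size $2^{j(1/\alpha+\eta)}$ into the $k$-summation, which on taking the boundary term degrades the exponent to $v-1/\alpha-\eta$ (equivalently, reintroduces a logarithmic factor of the type already present in \eqref{eq5:modcont2Y}). Removing this loss, so as to land on the exponent $v-1/\alpha$ with no $\eta$ and no logarithm, is precisely where the finer, blockwise control of the random coefficients carried by the event $\Omega_1^*$ (Proposition~\ref{omega1}) must be substituted for Lemma~\ref{omega0}. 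I therefore expect the heart of the argument to be the insertion of that sharp coefficient estimate into the fine-scale sum, so that the summation over $k$ at each scale is controlled on average rather than term by term.
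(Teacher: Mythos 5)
Your proposal follows essentially the same route as the paper: the triangle-inequality reduction through the intermediate point at $v_1\vee v_2$ combined with Proposition~\ref{cor:TWSEbis} is exactly the paper's proof of Theorem~\ref{cor:holder}, your uniform-in-$v$ spatial estimate is precisely the paper's Proposition~\ref{main:prop}, and your dyadic splitting at $j_1\simeq\log_2(1/|u_1-u_2|)$ with the sharp coefficient bound of Proposition~\ref{omega1} substituted for Lemma~\ref{omega0} on the near-diagonal coefficients (those with $|k|2^{-j}$ bounded) is exactly how the paper proves the key Proposition~\ref{prop:regul2}. The only cosmetic differences are that the paper isolates the low-frequency scales $j<0$ as a smooth field (Proposition~\ref{prop:regul1}) instead of folding them into the coarse-scale sum, and that the sharp bound of Proposition~\ref{omega1} is in fact needed for the near-diagonal $k$ in \emph{both} the coarse-scale and fine-scale sums, not only the fine one.
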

\noindent We mention that, even though it is more general than Theorem~\ref{main:result}, we have prefered to state Theorem~\ref{cor:holder} in the present section, and not in the introductory section, since it is a bit technical result. Let us now explain how Theorem~\ref{cor:holder} allows to obtain Theorem~\ref{main:result}.

\begin{proof}[Proof of Theorem \ref{main:result}]
There is no restriction to assume that the compact interval $I$ is of the form $I=[-M;M]$, where $M$ is some positive real number. Let us then consider an arbitrary $\o\in\Omega_0^* \cap \Omega_1^*$ and an arbitrary couple $(t,s) \in [-M;M]^2$, such that $t\neq s$. Using Relation~(\ref{m:def:lmsm}), Theorem~\ref{cor:holder}, condition $(\mathbf{A'})$, and Definition~\ref{def:holderspace}, one gets that
\begin{align*}
\big|Y(t,\o)-Y(s,\o)\big| & = \big|X(t,H(t),\o)-X(s,H(s),\o)\big|  \\
		&\le C(\o)\left( |t-s|^{H(t)\vee H(s)-1/\alpha}+|H(t)-H(s)| \right) \\
		& \le C(\o) |t-s|^{H(t)\vee H(s)-1/\alpha}+C'(\o)|t-s|^{\gamma_H}\\
		& \le C''(\o)|t-s|^{\min_{x\in I} H(x)-1/\alpha}+C'(\o)|t-s|^{\gamma_H}\\
		& \le C'''(\o)|t-s|^{\gamma_H \wedge \left( \min_{x\in I} H(x)-1/\alpha\right)}\,,
\end{align*}
where $C,\ldots, C'''$ are positive and finite random variables not depending on $(t,s)$. 
\end{proof}

Theorem~\ref{cor:holder} can be derived from Proposition~\ref{cor:TWSEbis} and the following proposition.
\begin{proposition}\label{main:prop}
For all fixed $\omega\in \Omega_0^* \cap \Omega_1^*$ and real numbers $M,a,b$ satisfying $M>0$ and $1/\alpha<a<b<1$, one has  (with the convention that $0/0=0$)
\begin{equation*}\label{eqn:regul3}
\sup_{(u_1,u_2,v)\in [-M;M]^2\times [a;b]}\left\lbrace \frac{|X(u_1,v,\omega)-X(u_2,v,\omega)|}{|u_1-u_2|^{v-1/\alpha}} \right\rbrace <\infty.
\end{equation*}
\end{proposition}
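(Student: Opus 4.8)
The plan is to fix $v\in[a;b]$ throughout (both arguments of $X$ are evaluated at the same $v$) and to establish a uniform H\"older estimate in the $u$-variable with exponent $v-1/\al$. First I would localize to the diagonal. Since, for $\o\in\Omega_0^*$, the path $(u,v)\mapsto X(u,v,\o)$ is continuous on the compact set $[-M;M]\times[a;b]$ (Remark~\ref{rem:contX}), the numerator is bounded there, while on the region $\{|u_1-u_2|\ge\rho\}$ the denominator $|u_1-u_2|^{v-1/\al}$ is bounded below by a positive constant depending only on $\rho,M,a,b$ (because $0<a-1/\al\le v-1/\al\le b-1/\al<1$). Hence the supremum over that region is finite, and it suffices to bound the ratio as $\delta:=|u_1-u_2|\to 0$. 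For such small $\delta$, set $J=J(\delta)\in\N$ with $2^{-J-1}<\delta\le 2^{-J}$.

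Next, using the wavelet representation (Theorem~\ref{thm:wavrepX}) and the cancellation of the $\Psi(-k,v)$ terms in the difference, I would write
\[
X(u_1,v)-X(u_2,v)=\sum_{j\in\Z}2^{-jv}\sum_{k\in\Z}\epsilon_{j,k}\big(\Psi(2^ju_1-k,v)-\Psi(2^ju_2-k,v)\big),
\]
and split the $j$-sum at the critical scale $J$. For the low frequencies $j\le J$ I would use $\Psi(2^ju_1-k,v)-\Psi(2^ju_2-k,v)=2^j\int_{u_2}^{u_1}(\partial_x\Psi)(2^jw-k,v)\,dw$ to factor out $2^j\delta$ and reduce matters to the supremum over $w\in[-M;M]$ of the ``derivative detail'' $\sum_k\epsilon_{j,k}(\partial_x\Psi)(2^jw-k,v)$; for the high frequencies $j>J$ I would use the triangle inequality and reduce to the supremum of the ``detail'' $\sum_k\epsilon_{j,k}\Psi(2^jw-k,v)$. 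In both cases the well-localization of $\Psi$ and $\partial_x\Psi$ (Lemma~\ref{localisation}) makes the $k$-sums absolutely convergent, and, \emph{provided} these scale-$j$ suprema are of the sharp order $2^{j/\al}$, the two geometric series telescope: $\sum_{j=1}^{J}2^{-jv}2^j\delta\,2^{j/\al}\asymp\delta\,2^{J(1-v+1/\al)}\asymp\delta^{v-1/\al}$ (using $1-v+1/\al>0$) and $\sum_{j>J}2^{-jv}2^{j/\al}\asymp 2^{-J(v-1/\al)}\asymp\delta^{v-1/\al}$ (using $v-1/\al>0$), which is exactly the desired bound. The very low frequencies $j\le 0$ are harmless and can be handled with the crude coefficient bound of Lemma~\ref{omega0}, since there $2^jM\le M$ and the series $\sum_{j\le0}2^{j(1-v)}(3+|j|)^{1/\al+\eta}$ converges, contributing $O(\delta)=O(\delta^{v-1/\al})$.

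The heart of the matter, and the main obstacle, is that the sharp order $2^{j/\al}$ for the scale-$j$ suprema is precisely what one \emph{cannot} obtain from Lemma~\ref{omega0}. Inserting $|\epsilon_{j,k}|\le C(3+|j|)^{1/\al+\eta}(3+|k|)^{1/\al+\eta}$ and summing over $k$ produces, at the critical scale $j\approx J$, an extra factor of order $\delta^{-\eta}(\log(1/\delta))^{1/\al+\eta}$, that is, exactly the spurious loss suffered by the earlier estimates (\ref{eq3:modcont2Y}) and (\ref{eq5:modcont2Y}); removing it is the whole content of the theorem. This is where I would invoke the refined, logarithm-free control of the random wavelet details furnished by Proposition~\ref{omega1} on the event $\Omega_1^*$. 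The structural fact I expect to exploit is that an increment of $X(\cdot,v)$ across a window of length $\delta\approx 2^{-J}$ only feels, at each scale $j\ge J$, the $O(2^{j-J})$ wavelets whose support meets that window, so that the governing quantity is a \emph{local} maximum of the heavy-tailed coefficients $\epsilon_{j,k}$ rather than a global one over all $\sim 2^{j}M$ indices; it is the uniform control of these local maxima over all windows and all scales, encoded by $\Omega_1^*$, that lets the critical scale contribute only $O(\delta^{v-1/\al})$, with no logarithmic factor.

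Finally I would assemble the low- and high-frequency bounds, each of order $\delta^{v-1/\al}$ uniformly in $u_1,u_2\in[-M;M]$ and $v\in[a;b]$, and combine them with the trivial bound away from the diagonal, to conclude that the displayed supremum is finite on $\Omega_0^*\cap\Omega_1^*$.
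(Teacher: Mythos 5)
Your proposal is correct and takes essentially the same approach as the paper, which decomposes $X$ into a low-frequency part (your scales $j\le 0$, shown Lipschitz using the crude bound of Lemma~\ref{omega0}) and a high-frequency part treated exactly by your dyadic split at the critical scale, with the mean value theorem below it, wavelet localization above it, Lemma~\ref{omega0} for the far coefficients $|k|2^{-j}>2(M+1)$, and Proposition~\ref{omega1} for the remaining ones. The only inaccuracy is your closing heuristic: $\Omega_1^*$ does not encode control of window-local maxima, but rather the plain uniform bound $|\epsilon_{j,k}(\o)|\le C'(\o)\,2^{j/\al}$ for all $j\ge 0$ and all $k$ with $|k|2^{-j}\le l$ (a global bound over the compact, obtained by writing $\epsilon_{j,k}$ via integration by parts against the c\`adl\`ag L\'evy path), and this is precisely the sharp order $2^{j/\al}$ your two geometric series require, so the argument goes through as written.
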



\begin{proof}[Proof of Theorem~\ref{cor:holder}]
For all $\o\in\Omega_0^* \cap \Omega_1^*$ and for each $(u_1,u_2,v_1,v_2) \in [-M;M]^2\times [a;b]^2$, one sets
$$
f(u_1,u_2,v_1,v_2,\o):=\frac{\big| X(u_1,v_1,\o) - X(u_2,v_2,\o)\big|}{ |u_1-u_2|^{v_1\vee v_2-1/\alpha} +|v_1-v_2|}\,,
$$
with the convention that $0/0=0$. Next setting $v_1\wedge v_2:=\min (v_1,v_2)$, and using the fact that 
\[
f(u_1,u_2,v_1,v_2,\o)=f(u_2,u_1,v_2,v_1,\o)\,,
\]
 it follows that
\begin{align}\label{eq2:modcont}
& \sup_{(u_1,u_2,v_1,v_2) \in [-M;M]^2\times [a;b]^2} \Bigg\{\frac{\big| X(u_1,v_1,\o) - X(u_2,v_2,\o)\big|}{ |u_1-u_2|^{v_1\vee v_2-1/\alpha}+|v_1-v_2|} \Bigg\} \nonumber \\
&  = \sup_{(u_1,u_2,v_1,v_2) \in [-M;M]^2\times [a;b]^2} \Bigg\{ \frac{\big| X(u_1,v_1\vee v_2,\o) - X(u_2,v_1\wedge v_2,\o)\big|}{ |u_1-u_2|^{v_1\vee v_2-1/\alpha} +|v_1-v_2|} \Bigg\}.
\end{align}
Moreover, using the triangle inequality, and the inequality, for all $(u_1,u_2,v_1,v_2) \in [-M;M]^2\times [a;b]^2$,
\begin{align*}
\max\left\{|u_1-u_2|^{v_1\vee v_2-1/\alpha}, |v_1-v_2|\right\} \le |u_1-u_2|^{v_1\vee v_2-1/\alpha} +|v_1-v_2|\,,
\end{align*}
one gets that
\begin{align}
\label{eq3:modcont}
& \sup_{(u_1,u_2,v_1,v_2) \in [-M;M]^2\times [a;b]^2} \left\{\frac{\big| X(u_1,v_1\vee v_2,\o) - X(u_2,v_1\wedge v_2,\o)\big|}{ |u_1-u_2|^{v_1\vee v_2-1/\alpha} +|v_1-v_2|} \right\} \nonumber \\
& \le \sup_{(u_1,u_2,v_1,v_2) \in [-M;M]^2\times [a;b]^2} \left\{\frac{\big| X(u_1,v_1\vee v_2,\o) - X(u_2,v_1\vee v_2,\o)\big|}{ |u_1-u_2|^{v_1\vee v_2-1/\alpha}+|v_1-v_2|} \right\} \nonumber \\
& \qquad +\sup_{(u_1,u_2,v_1,v_2) \in [-M;M]^2\times [a;b]^2} \left\{\frac{\big| X(u_2,v_1\vee v_2,\o) - X(u_2,v_1\wedge v_2,\o)\big|}{ |u_1-u_2|^{v_1\vee v_2-1/\alpha} +|v_1-v_2|} \right\} \nonumber \\
& \le \sup_{(u_1,u_2,v) \in [-M;M]^2\times [a;b]} \left\{\frac{\big| X(u_1,v,\o) - X(u_2,v,\o)\big|}{ |u_1-u_2|^{v-1/\alpha} } \right\} \nonumber\\
& \qquad +\sup_{(u, v_1,v_2) \in [-M;M]\times [a;b]^2} \left\{\frac{\big| X(u,v_1,\o) - X(u, v_2,\o)\big|}{ |v_1-v_2|} \right\}.
\end{align}
Finally, combining (\ref{eq2:modcont}) and (\ref{eq3:modcont}) with Propositions~\ref{cor:TWSEbis}~and~\ref{main:prop} together, one obtains~(\ref{eq:cor:holder}).
\end{proof}

For proving Proposition~\ref{main:prop} one splits the field $X$ in two parts denoted by \\
$\dot{X}:=\big\{\dot{X}(u,v):(u,v)\in \R\times (1/\al;1)\big\}$ and $\ddot{X}:=\big\{\ddot{X}(u,v):(u,v)\in \R\times (1/\al;1)\big\}$. $\dot{X}$ is called the low frequency part, and $\ddot{X}$ the high frequency part. These two $\sas$ random fields are defined, by setting, for all $(u,v)\in \R\times (1/\al;1)$,
\begin{eqnarray}
\label{eq:lfpartX}
\dot{X}(u,v) & =& \sum_{j=-\infty}^{-1} \sum_{k\in\Z} 2^{-jv}  \epsilon_{j,k} \big ( \Psi(2^j u-k,v)-\Psi(-k,v) \big) \nonumber \\
& = & \sum_{j=1}^{+\infty} \sum_{k\in\Z} 2^{jv}  \epsilon_{-j,k} \big (\Psi(2^{-j} u-k,v)-\Psi(-k,v)\big)\,, 
\end{eqnarray}
and
\begin{equation}
\label{eq:hfpartX}
\ddot{X}(u,v) = \sum_{j=0}^{+\infty} \sum_{k\in\Z} 2^{-jv}  \epsilon_{j,k} \big (\Psi(2^j u-k,v)-\Psi(-k,v) \big)\,. 
\end{equation}

\begin{rem}
\label{rem:cont-lowhigh}
One can show that, on the event $\Omega_0^*$, the two series defining $\dot{X}$ and $\ddot{X}$ share exactly the same uniform convergence property with respect to $(u,v)$ as the series in (\ref{eq1:descrip}). Therefore, in view of Lemma~\ref{localisation}, for each fixed  $\omega\in\Omega_0^*$, the sample paths $(u,v)\mapsto \dot{X}(u,v,\omega)$ and $(u,v)\mapsto \ddot{X}(u,v,\omega) $ are continuous functions on $\R\times (1/\alpha;1)$.
\end{rem}
\noindent 
\begin{proof}[Proof of Proposition~\ref{main:prop}]
One clearly has that 
\[
X(u,v,\o)=\dot{X}(u,v,\o)+\ddot{X}(u,v,\o)\,,
\]
for all $(u,v)\in\R\times (1/\alpha;1)$ and $\o\in\Omega_0^*$. Therefore  Proposition~\ref{main:prop} is a straightforward consequence of the following two propositions.
\end{proof}


\begin{proposition}\label{prop:regul1}
For each fixed $\omega\in\Omega_0^*$, the sample path $(u,v) \mapsto\dot{X}(u,v,\omega)$ is a 3 times continuously differentiable function on $\R\times (1/\alpha;1)$. Therefore, it is a Lipschitz function on every compact rectangle $[-M;M]\times [a;b]\subset \R\times (1/\alpha;1)$; in other words, one has that (with the convention that $0/0=0$)
\[
\sup_{(u_1,u_2,v_1,v_2)\in [-M;M]^2\times [a;b]^2}\left\lbrace \frac{|\dot{X}(u_1,v_1,\omega)-\dot{X}(u_2,v_2,\omega)|}{|u_1-u_2|+|v_1-v_2|} \right\rbrace <\infty\,,
\]
and consequently that
\[
\sup_{(u_1,u_2,v)\in [-M;M]^2\times [a;b]}\left\lbrace \frac{|\dot{X}(u_1,v,\omega)-\dot{X}(u_2,v,\omega)|}{|u_1-u_2|} \right\rbrace <\infty\,.
\]
\end{proposition}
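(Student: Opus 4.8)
The plan is to prove the stronger statement that, for each fixed $\omega\in\Omega_0^*$, the series defining $\dot X$ may be differentiated term by term in $u$ and $v$ up to total order three, and that each resulting series converges uniformly on every compact rectangle $[-M;M]\times[a;b]\subset\R\times(1/\al;1)$. By the classical theorem on differentiation of series of functions, this yields that $(u,v)\mapsto\dot X(u,v,\omega)$ is of class $\ce^3$ there; the restriction to order three is dictated precisely by the range $p\in\{0,1,2,3\}$ in Lemma~\ref{localisation}. Continuity of the first order partials on the compact rectangle then forces them to be bounded, whence the Lipschitz estimate, and the second displayed inequality is simply the special case $v_1=v_2=v$.

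The core is a uniform bound on the general term of each differentiated series. I would write the generic summand as $2^{jv}\epsilon_{-j,k}(\omega)\big(\Psi(2^{-j}u-k,v)-\Psi(-k,v)\big)$, $j\ge1$, and apply the Leibniz rule to the product of $2^{jv}$ with the bracket. Two mechanisms produce the decaying factor $2^{-j}$ needed to defeat the growth of $2^{jv}$. When at least one $u$-derivative is taken, the constant-in-$u$ term $\Psi(-k,v)$ drops out and the chain rule contributes a factor $2^{-ja}$ for $a\ge1$ $u$-derivatives, together with $(\partial_x^a\partial_v^{b}\Psi)(2^{-j}u-k,v)$. When no $u$-derivative is taken, I would instead extract the factor $2^{-j}$ from the increment $\partial_v^{b}\Psi(2^{-j}u-k,v)-\partial_v^{b}\Psi(-k,v)$ by the mean value theorem, the relevant quantity being $\partial_x\partial_v^{b}\Psi$. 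In either case, $\partial_v$ acting on $2^{jv}$ only contributes polynomial factors $(j\ln2)^{c}$, which are harmless.

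Collecting the estimates, and using Lemma~\ref{omega0} with a fixed $\eta\in(0,1-1/\al)$ (legitimate since $\al\in(1;2)$ gives $1-1/\al>0$) together with the localization $(3+|x|)^2|(\partial_x^p\partial_v^q\Psi)(x,v)|\le C$ of Lemma~\ref{localisation}, each summand is bounded, uniformly in $(u,v)\in[-M;M]\times[a;b]$, by
\[
C(\omega)\,P(j)\,2^{j(v-1)}\,(3+|k|)^{1/\al+\eta}\,(3+|k|)^{-2},
\]
where $P$ is a polynomial and I have used that for $j\ge1$, $|u|\le M$ one has $|2^{-j}u|\le M$, so the argument of the localized derivative stays within distance $M$ of $-k$ and its weight is comparable to $(3+|k|)^{-2}$. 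The sum over $j$ converges geometrically because $v-1\le b-1<0$ makes $2^{j(v-1)}\le(2^{b-1})^{j}$ dominate any polynomial $P(j)$, while the sum over $k$ converges because $1/\al+\eta-2<-1$. This gives normal, hence uniform, convergence of every differentiated series and completes the argument. The main obstacle is the term carrying no $u$-derivative: there the amplitude $2^{jv}$ grows, and it is only the mean value theorem applied to the increment of $\partial_v^{b}\Psi$, combined with the uniform spatial localization of $\partial_x\partial_v^{b}\Psi$ supplied by Lemma~\ref{localisation}, that produces the compensating factor $2^{-j}$ needed for convergence.
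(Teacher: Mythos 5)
Your proposal is correct and follows essentially the same route as the paper's proof: both justify term-by-term differentiation by establishing normal convergence of the differentiated series on compact rectangles, extract the decisive decay $2^{-j}$ either from the chain rule (when at least one $u$-derivative is present) or from the mean value theorem applied to the increment $\partial_v^{b}\Psi(2^{-j}u-k,v)-\partial_v^{b}\Psi(-k,v)$ (when none is), and control the sums via Lemma~\ref{omega0} combined with the localization estimate of Lemma~\ref{localisation}. The only cosmetic differences are in the treatment of the sum over $k$ (you compare $(3+|2^{-j}u-k|)^{-2}$ directly to $(3+|k|)^{-2}$ using $|2^{-j}u|\le M$, whereas the paper shifts the index by the integer part of $2^{-j}u$) and in scope (the paper handles $v$-derivatives of arbitrary order, which the proposition does not require).
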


\begin{proposition}\label{prop:regul2}
For all fixed $\omega\in \Omega_0^* \cap \Omega_1^*$ and any real numbers $M,a,b$ satisfying $M>0$ and $1/\alpha<a<b<1$, one has (with the convention that $0/0=0$)
\begin{equation}\label{eq1:regul2}
\sup_{(u_1,u_2,v)\in [-M;M]^2\times [a;b]}\left\lbrace \frac{|\ddot{X}(u_1,v,\omega)-\ddot{X}(u_2,v,\omega)|}{|u_1-u_2|^{v-1/\alpha}} \right\rbrace <\infty\,.
\end{equation}
\end{proposition}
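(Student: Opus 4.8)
The plan is to work directly with the wavelet series (\ref{eq:hfpartX}) defining $\ddot X$ and to combine the decay of $\Psi$ and $\partial_x\Psi$ from Lemma~\ref{localisation} with a dyadic splitting of the scales. Fix $\omega$, $M$, $a$, $b$ and a pair $(u_1,u_2,v)\in[-M;M]^2\times[a;b]$ with $u_1\neq u_2$, and put $\rho:=|u_1-u_2|\in(0,2M]$. Since $(u,v)\mapsto\ddot X(u,v,\omega)$ is continuous on the compact set $[-M;M]\times[a;b]$ (Remark~\ref{rem:cont-lowhigh}) it is bounded there, while $v-1/\alpha\ge a-1/\alpha>0$ keeps the denominator $\rho^{v-1/\alpha}$ bounded below as soon as $\rho$ is bounded below; hence only small $\rho$ matters, and I would fix the integer $J=J(\rho)\ge0$ determined by $2^{-J-1}<\rho\le 2^{-J}$, so that $2^J\asymp\rho^{-1}$. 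Because the terms $\Psi(-k,v)$ cancel in the increment, one has
\[
\ddot X(u_1,v,\omega)-\ddot X(u_2,v,\omega)=\sum_{j=0}^{+\infty}2^{-jv}\sum_{k\in\Z}\epsilon_{j,k}(\omega)\big(\Psi(2^ju_1-k,v)-\Psi(2^ju_2-k,v)\big),
\]
and I would split $\sum_{j\ge0}$ into the low-frequency block $0\le j\le J$ and the high-frequency block $j>J$.

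On the low block the two arguments $2^ju_1-k$ and $2^ju_2-k$ differ by $2^j\rho\le1$, so the mean value theorem together with the bound on $\partial_x\Psi$ in (\ref{eq:localisation}) gives $|\Psi(2^ju_1-k,v)-\Psi(2^ju_2-k,v)|\le c\,2^j\rho\,(3+|2^ju_1-k|)^{-2}$, the localization at the intermediate point being comparable to that at $2^ju_1-k$ precisely because $2^j\rho\le1$. On the high block I would instead use the triangle inequality and bound $\Psi(2^ju_i-k,v)$ separately by $c\,(3+|2^ju_i-k|)^{-2}$. In both cases the sum over $k$ is controlled by the standard dyadic summation estimate: for a weight $w$ with $|w(x)|\le c\,(3+|x|)^{-2}$ and any $\beta\in[0,1)$ one has $\sum_{k\in\Z}(3+|k|)^{\beta}|w(2^ju-k)|\le C\,2^{j\beta}$ uniformly in $u\in[-M;M]$, the dominant contribution coming from $|k|\lesssim 2^jM$.

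If one now feeds in the crude pointwise bound (\ref{eq1:omega0}) with exponent $\beta=1/\alpha+\eta$, each block becomes a geometric series in $j$ (with ratio bounded away from $1$, here using $0<v-1/\alpha<1$) dominated by its endpoint $j\asymp J$, and since $2^{-J(v-1/\alpha)}\asymp\rho^{\,v-1/\alpha}$ both blocks contribute of order $\rho^{\,v-1/\alpha}$ up to a spurious factor $(3+J)^{1/\alpha+\eta}\rho^{-\eta}\asymp(\log(1/\rho))^{1/\alpha+\eta}\rho^{-\eta}$. This extraneous factor is exactly the obstacle: it is a logarithmic loss of the type present in the earlier estimate (\ref{eq3:modcont2Y}) and is precisely what the present paper aims to delete, so the pointwise estimate (\ref{eq1:omega0}) is simply too lossy to reach the critical exponent.

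The resolution, and the genuinely hard part, is therefore to replace (\ref{eq1:omega0}) by the sharper information available on $\Omega_1^*$ through Proposition~\ref{omega1}. Rather than bounding each $|\epsilon_{j,k}(\omega)|$ separately, I would use Proposition~\ref{omega1} to control the scale-$j$ blocks of the series as a whole, uniformly in $(u_1,u_2,v)$; its proof must exploit the cancellations coming from the symmetry of the $\sas$ coefficients $\epsilon_{j,k}$ (so that a scale-$j$ block is itself $\sas$ with scale governed by an $\ell^\alpha$ norm of the wavelet increments, for which the localized $k$-sum converges without the growing weight $(3+|k|)^{\beta}$), which is what removes both the $(3+j)^{1/\alpha+\eta}$ factor and the $\rho^{-\eta}$ loss. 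Once the scale-wise bounds carry no superfluous power of $j$, the two geometric series sum cleanly to $O(\rho^{\,v-1/\alpha})$ uniformly in $(u_1,u_2,v)$, which is (\ref{eq1:regul2}). The deterministic skeleton above is robust and routine; essentially all the difficulty is concentrated in the probabilistic estimate of Proposition~\ref{omega1}.
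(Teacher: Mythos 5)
Your deterministic skeleton coincides with the paper's own proof: the reduction of the case $|u_1-u_2|>1$ to boundedness via Remark~\ref{rem:cont-lowhigh}, the dyadic threshold $2^{-J-1}<|u_1-u_2|\le 2^{-J}$ (the paper's $j_0$), the mean value theorem combined with~(\ref{eq:localisation}) at scales $j\le J$, and the plain triangle inequality at scales $j>J$. You also diagnose correctly that using~(\ref{eq1:omega0}) everywhere leaves a parasitic factor of order $(\log (1/\rho))^{1/\al+\eta}\rho^{-\eta}$, and that Proposition~\ref{omega1} is what must remove it. The gap lies in how you propose to invoke Proposition~\ref{omega1}. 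It is not an estimate on ``scale-$j$ blocks as a whole'', and its proof does not exploit cancellations, the symmetry of the $\epsilon_{j,k}$, or $\ell^{\al}$ norms of wavelet increments: it is a pointwise bound on individual coefficients, $|\epsilon_{j,k}(\omega)|\le C'(\omega)2^{j/\al}$, valid only for the ``near'' indices $(j,k)\in\Z^+\times\Z$ with $|k|2^{-j}\le l$, and it is proved by integrating by parts (Lemma~\ref{prop:ipp}) against the c\`adl\`ag L\'evy $\sas$ process $s\mapsto\Za{s}$, whose almost sure boundedness on compact sets furnishes $C'(\omega)$. As written, your concluding step therefore rests on a statement the paper does not contain and that you have not proved.

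Because Proposition~\ref{omega1} only covers $|k|2^{-j}\le l$, using it forces a second splitting, of the $k$-sum at each scale, which is missing from your proposal: near indices $D_j^1=\{k\in\Z : |k|2^{-j}\le 2(M+1)\}$ versus far indices $D_j^2=\Z\setminus D_j^1$ as in~(\ref{def:DJ12}). Your global summation bound $\sum_{k\in\Z}(3+|k|)^{\beta}|w(2^ju-k)|\le C2^{j\beta}$ is precisely the step that creates the loss, so it cannot be kept as is. The paper instead argues in two regimes: on $D_j^1$, it applies $|\epsilon_{j,k}|\le C'2^{j/\al}$, after which the localized $k$-sum is $O(1)$ uniformly in $u\in[-M;M]$ and the $j$-sums become clean geometric series, contributing $O\big(2^{J(1-v+1/\al)}\big)\cdot 2^{-J}$ on the low block and $O\big(2^{-J(v-1/\al)}\big)$ on the high block, i.e.\ $O\big(|u_1-u_2|^{v-1/\al}\big)$; on $D_j^2$, it keeps the crude bound~(\ref{eq1:omega0}) but observes that $|k|>2(M+1)2^j$ and $|2^ju_i|\le M2^j$ force $3+|2^ju_i-k|\ge \tfrac{1}{2}(1+|k|)$, so the $k$-sum there is $O\big(2^{-j(1-1/\al-\eta)}\big)$; this extra geometric decay absorbs both the factor $(3+j)^{1/\al+\eta}$ and the $2^{j\eta}$, so no sharpened coefficient bound is needed on the far indices. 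Only with both regimes handled this way do the series sum to $O\big(|u_1-u_2|^{v-1/\al}\big)$ and yield~(\ref{eq1:regul2}); with Proposition~\ref{omega1} read as you read it, the proof does not close.
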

\noindent
These two propositions are proved in Section~\ref{sect:proofs}. 

We mention that the strategy of the proof of Proposition~\ref{prop:regul1} is to show that on the 
event $\Omega_0 ^*$, the uniform convergence property with respect to $(u,v)$ of the series in (\ref{eq:lfpartX}) is preserved when the partial derivative operator $\partial_x^p \partial_v^q$, with $(p,q)\in \{0,1,2,3\}\times \Z^+$, is applied to each term of the series.

Let us emphasize that the keystone of the proof of the crucial Proposition~\ref{prop:regul2} consists in sharpening the estimate of $\epsilon_{j,k}$ provided by~(\ref{eq1:omega0}), when $j\ge 0$ and
$|k|2^{-j}$ is bounded by an arbitrary finite constant not depending on $(j,k)$. More precisely the keystone consists in the following proposition.

\begin{proposition}\label{omega1}
There exists an event of probability 1, denoted by $\Omega_1^{*}$, such that, for every fixed positive real number $l$, one has, for all $\omega \in\Omega_1^*$ and for each $(j,k)\in\Z^+\times\Z$ satisfying $|k|2^{-j}\le l$, 
\begin{equation}\label{eq1:omega1}
\big| \epsilon_{j,k}(\omega) \big| \le C'(\o) 2^{j/\alpha}, 
\end{equation}
where $C'$ is positive and finite random variable.
\end{proposition}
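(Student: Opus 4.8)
The plan is to reformulate the statement in terms of the normalized coefficients $c_{j,k}:=2^{-j/\alpha}\epsilon_{j,k}=\int_\R \psi(2^js-k)\,\Za{ds}$, so that the bound~(\ref{eq1:omega1}) becomes exactly the assertion that $\sup_{j\ge0,\,|k|\le l2^j}|c_{j,k}|<\infty$ almost surely. Three elementary remarks frame the argument. First, all the $c_{j,k}$ are $\sas$ and identically distributed, with scale parameter $\|\psi\|_{\La}:=\big(\int|\psi|^\alpha\big)^{1/\alpha}$ independent of $(j,k)$; in particular the tail is $\PR(|c_{j,k}|>\lambda)\sim c_\alpha\|\psi\|_{\La}^\alpha\lambda^{-\alpha}$. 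Second, since $\psi$ is compactly supported and $|k|2^{-j}\le l$, every integrand $\psi(2^j\cdot-k)$ is supported in one fixed compact interval $K_l$; thus for each $l$ only the restriction of $\Za{ds}$ to $K_l$ is involved. Third, $\int_\R\psi=0$. I would then set $\Omega_1^*:=\bigcap_{l\in\N}\{\sup_{j\ge0,\,|k|\le l2^j}|c_{j,k}|<\infty\}$ and, for a real $l$, invoke the bound for $\lceil l\rceil$; since a countable intersection of almost sure events is almost sure, it remains to fix $l\in\N$ and prove finiteness of the supremum.

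Before anything else I would stress \emph{why the naive route fails}, as this dictates the whole strategy: at level $j$ there are of order $2^j$ coefficients, each with tail of order $2^{-j}$ at a fixed threshold, so a union bound over $k$ gives a probability of order $1$ per level and $\sum_j O(1)=\infty$. Hence a term-by-term Borel--Cantelli cannot deliver the clean exponent $2^{j/\alpha}$ (it would only produce an extra polynomial-in-$j$ factor), and the dependence of the $c_{j,k}$ across scales — coming from the fact that they are all built from the \emph{same} random measure — must be exploited. The mechanism is that $\sup_{j,k}|c_{j,k}|$ is essentially governed by the largest jumps of $\Za{ds}$ on $K_l$, which are finite in number and persist across all scales.

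Concretely, I would use the Poisson/LePage description of $\Za{\cdot}$ on the finite interval $K_l$: a Poisson random measure of jumps $(x_i,\beta_i)$ with intensity $ds\otimes c_\alpha|\beta|^{-1-\alpha}d\beta$, so that $c_{j,k}=\lim_{\varepsilon\to0}\sum_{|\beta_i|>\varepsilon}\psi(2^jx_i-k)\beta_i$ (no compensator survives because $\int\psi=0$). I would split the jumps by dyadic magnitude. The big jumps ($|\beta_i|>1$) are almost surely finite in number and, because $\alpha>1$, have almost surely finite total mass $\sum_{|\beta_i|>1}|\beta_i|<\infty$; the big-jump part of $c_{j,k}$ is then bounded by $\|\psi\|_\infty\sum_{|\beta_i|>1}|\beta_i|$, uniformly in $(j,k)$, with no cancellation needed. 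For a block of small jumps of size $\approx2^{-m}$ (there are $\approx2^{m\alpha}$ of them) I would distinguish two regimes. In the sparse regime $m\alpha\lesssim j$, a single interval $J_{j,k}=2^{-j}(k+\supp{\psi})$ meets only a few jumps of the block, and an occupancy (balls-in-bins) estimate, controlled by Borel--Cantelli over $(j,m)$, bounds the block's contribution by $\approx2^{-m}$, which is geometrically summable in $m$ and hence $O(1)$ uniformly in $j$. In the dense regime $m\alpha\gtrsim j$, many jumps fall in each interval and cancellation is indispensable: here I would apply a Bernstein-type concentration inequality to the compensated, bounded-jump sum inside each interval (its variance per interval is of order $2^{-j}$), together with a union bound over the $\approx2^j$ positions $k$ and a Borel--Cantelli over $(j,m)$. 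Summing the maxima over all magnitude blocks (using $\max_k|\sum_m(\cdots)|\le\sum_m\max_k|\cdots|$) then yields $\sup_{j,k}|c_{j,k}|<\infty$, and intersecting over $l\in\N$ defines $\Omega_1^*$ and produces the finite $C'(\omega)$.

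The main obstacle is the dense regime: for $\alpha\in(1,2)$ the absolute jump sum inside a small interval diverges, so one cannot bound $c_{j,k}$ by absolute values and must retain the cancellations through a concentration inequality rather than a union bound. The delicate point is quantitative: the per-block contribution to $\max_{|k|\le l2^j}|c_{j,k}|$ carries a $\sqrt{\log 2^j}=O(\sqrt j)$ factor from the maximum over the $\approx2^j$ positions, and one must verify that the geometric decay in the block index $m$ (which holds precisely because $\alpha<2$) absorbs this factor and leaves no residual power of $j$, so that the final bound is the clean $2^{j/\alpha}$ rather than $2^{j/\alpha}$ times a logarithmic or polynomial correction. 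A secondary technical point is the passage from the distributional jump identity to a statement about the \emph{given} measure $\Za{ds}$, which is harmless here since finiteness of the supremum is an event whose probability is determined by the joint law of $\{c_{j,k}\}$.
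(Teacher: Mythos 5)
Your strategy is workable and genuinely different from the paper's, but it is vastly heavier; the paper's proof is essentially three lines. It applies the integration-by-parts Lemma~\ref{prop:ipp} to $f(s)=\psi(2^js-k)$ and changes variables $u=2^js-k$ to get, almost surely, $\epsilon_{j,k}=-2^{j/\al}\int_{\R}\psi'(u)\,\Za{(u+k)2^{-j}}\,du$; since $\supp{\psi}\subseteq [-R;R]$ and $|k|2^{-j}\le l$ force $\big|(u+k)2^{-j}\big|\le R+l$ on the domain of integration, this yields $|\epsilon_{j,k}|\le C' 2^{j/\al}$ with $C'=\big(\sup_{|x|\le R+l}|\Za{x}|\big)\int_{-R}^{R}|\psi'(u)|\,du$, finite almost surely because the L\'evy $\sas$ process $\{\Za{s}:s\in\R\}$ has c\`adl\`ag, hence locally bounded, paths; $\Omega_1^*$ is then the countable intersection of the events on which these identities hold and the path is c\`adl\`ag. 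Note that this exploits exactly the structural fact you isolate (all coefficients with $|k|2^{-j}\le l$ see only the restriction of $\Za{ds}$ to one fixed compact, and the uniform bound is inherited from a pathwise property of that restriction), but the paper encodes the pathwise property as local boundedness of the c\`adl\`ag primitive $s\mapsto\Za{s}$ rather than as a statement about the jump configuration, which eliminates your entire multiscale apparatus. Your Poisson/LePage route does go through, including the point you flag as delicate: in the dense regime, putting the per-block thresholds at the fixed geometric level $2^{-\delta m}$ with $0<\delta<(2-\al)/2$, rather than at the standard-deviation scale, makes the Bernstein probabilities super-exponentially small, so the union bound over the $\approx 2^j$ positions and over $(j,m)$ costs no logarithmic factor and $\sum_m 2^{-\delta m}<\infty$; in the sparse regime your claim that the block contribution is $\approx 2^{-m}$ is slightly too strong, since the maximal occupancy over $k$ is not $O(1)$ near the boundary $m\al\approx j$ (it can be of order $j/\log j$ there), but that correction is crushed by $2^{-m}\approx 2^{-j/\al}$, so the total is still $O(1)$. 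Two slips you should fix: the $c_{j,k}=2^{-j/\al}\epsilon_{j,k}$ are \emph{not} identically distributed with scale $\|\psi\|_{\La(\R)}$ — that is true of the $\epsilon_{j,k}$, while $c_{j,k}$ has scale $2^{-j/\al}\|\psi\|_{\La(\R)}$, which is in fact what your (correct) ``naive union bound fails'' computation tacitly uses; and the per-interval block variance is of order $2^{-j}2^{-m(2-\al)}$, not $2^{-j}$ — the $m$-factor being precisely the geometric decay your summation needs. As for what each approach buys: yours needs no smoothness of $\psi$ (the paper's lemma requires a twice continuously differentiable integrand) and gives a template extending to noises defined by LePage series that lack a convenient pathwise primitive, e.g.\ multistable ones; the paper's buys brevity and an explicit constant $C'$.
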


\noindent The proof of Proposition~\ref{omega1} will be given in Section~\ref{sect:proofs}. We mention that it relies on the following lemma which can be viewed as a integration by parts formula  between a twice differentiable compactly supported function and a L\'evy $\sas$ process. The proof of this lemma will also be given in Section~\ref{sect:proofs}.
\begin{lemma}\label{prop:ipp}
Let $f:\R\rightarrow\R$ be a twice times continuously differentiable compactly supported function, and let $\Za{\cdot}$ be a $\sas$ random measure with a parameter $\al$ belonging to $(1;2)$. Then, one has, almost surely
\begin{equation*}
\int_{\R}f(s)\Za{ds} = - \int_{\R} f'(s)\Za{s} ds,
\end{equation*}
where $\Za{s}:=\Za{[0;s]}$ if $s\ge 0$ and $\Za{s}:=-\Za{[s;0]}$ otherwise. Notice that the stochastic process $\{\Za{s}:s\in\R\}$ defined in this way is a L\'evy $\sas$ process which we always identify with its c\`adl\`ag modification. 
\end{lemma}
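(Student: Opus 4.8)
The plan is to prove the identity by approximating $f$ with step functions, performing a discrete summation by parts, and then reconciling two different limit procedures. Since $f$ is compactly supported, fix $T>0$ with $\supp{f}\subset(-T,T)$, and for each $n$ choose a partition $-T=s_0<s_1<\dots<s_n=T$ of $[-T;T]$ (I suppress the dependence on $n$ in the notation) whose mesh tends to $0$. Let $f_n:=\sum_{i=1}^n f(s_{i-1})\,\ind_{[s_{i-1};s_i)}$ be the associated step function. On the one hand, since $f$ is uniformly continuous and supported in $[-T;T]$, one has $f_n\to f$ in $\La(\R)$, and the fundamental continuity property of the $\sas$ stochastic integral (the scale parameter of $\int_\R g\,\Za{ds}$ equals the $\La(\R)$-norm of $g$, see \citep{SamTaq}) yields that $\int_\R f_n(s)\,\Za{ds}\to\int_\R f(s)\,\Za{ds}$ in probability. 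On the other hand, by the very definition of the integral of a step function, one has, almost surely, $\int_\R f_n(s)\,\Za{ds}=\sum_{i=1}^n f(s_{i-1})\big(\Za{s_i}-\Za{s_{i-1}}\big)$.

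The second step is the discrete integration by parts: applying Abel summation to this finite sum gives, almost surely,
\[
\int_\R f_n(s)\,\Za{ds}=f(s_{n-1})\,\Za{s_n}-f(s_0)\,\Za{s_0}-\sum_{i=1}^{n-1}\big(f(s_i)-f(s_{i-1})\big)\,\Za{s_i}.
\]
Because $\supp{f}\subset(-T,T)$, as soon as the mesh is smaller than the distance from $\supp{f}$ to $\{\pm T\}$, both boundary values $f(s_0)=f(-T)$ and $f(s_{n-1})$ vanish, so only the sum survives. Writing $f(s_i)-f(s_{i-1})=f'(\xi_i)(s_i-s_{i-1})$ for some $\xi_i\in(s_{i-1};s_i)$, this surviving term is exactly a Riemann sum $-\sum_{i=1}^{n-1} f'(\xi_i)\,\Za{s_i}\,(s_i-s_{i-1})$ for the function $u\mapsto f'(u)\Za{u}$. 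For each fixed $\o$ in a set of full measure the sample path $u\mapsto\Za{u}$ is c\`adl\`ag, hence bounded on $[-T;T]$ with at most countably many discontinuities; consequently $u\mapsto f'(u)\Za{u}$ is Riemann integrable there, and these Riemann sums converge, as the mesh tends to $0$, to $-\int_\R f'(u)\Za{u}\,du$ (note $f'$ vanishes outside $[-T;T]$).

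It remains to reconcile the two convergences, and this is the step I expect to require the most care, since the left-hand side converges only \emph{in probability} while the Riemann sums converge \emph{almost surely}. I would proceed as follows: along the chosen sequence of partitions the almost sure identities above hold simultaneously on a single event of probability~$1$, namely a countable intersection; on that event the Riemann sums converge almost surely to $-\int_\R f'(u)\Za{u}\,du$. Extracting a subsequence of partitions along which $\int_\R f_n(s)\,\Za{ds}$ also converges almost surely to $\int_\R f(s)\,\Za{ds}$ (possible because it converges in probability), and using that the two sides coincide identically for every $n$, the two almost sure limits must agree, which is precisely the announced formula. I note that the only hypothesis genuinely used is $f\in\ceun$ with compact support; the $\cedeux$ regularity stated is not needed for the lemma itself and simply matches the smoothness available in the intended application to $\Psi$.
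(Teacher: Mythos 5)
Your proof is correct, and while it shares the paper's skeleton (approximate $f$ by step functions, then perform a discrete Abel summation), the analytical engine is genuinely different. The paper never leaves $L^1(\Omega)$: it expands the increments $f(y_{j,k+1})-f(y_{j,k})$ by Taylor's formula to \emph{second} order, kills the remainder using $\sup_{x\in[a;b]}\ESP|\Za{x}|<\infty$ (finite because $\al>1$), and identifies the limit of the first-order sum with $\int_a^b f'(s)\Za{s}\,ds$ via Fubini--Tonelli together with the moment estimates $|f'(x)-f'(y)|\le c|x-y|$ and $\ESP|\Za{x}-\Za{y}|\le c|x-y|^{1/\al}$; since every approximation converges in $L^1(\Omega)$, no reconciliation of modes of convergence is ever needed. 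You instead argue pathwise: the c\`adl\`ag property makes $u\mapsto f'(u)\Za{u}$ almost surely bounded on $[-T;T]$ with at most countably many discontinuities, hence Riemann integrable, so the surviving sum converges almost surely, and you then patch the in-probability convergence of the left-hand side by a subsequence extraction. Your route buys two things: it only consumes $f\in\ceun$ (the paper genuinely uses $f''$ twice, in the Taylor remainder and in the Lipschitz bound on $f'$, so your closing remark about the hypothesis is a fair observation), and it makes transparent why the c\`adl\`ag identification in the statement of the lemma matters; your handling of the boundary terms (both vanish identically once the mesh is small, since $\supp{f}$ is strictly interior to $(-T;T)$) is also cleaner than the paper's, where one boundary term only tends to $0$ in $L^1(\Omega)$. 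The paper's route buys independence from any path regularity of the L\'evy process and from subsequence arguments, and yields the stronger $L^1(\Omega)$ convergence of every term.

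One small repair is needed in your argument: the sum $-\sum_{i}f'(\xi_i)\Za{s_i}(s_i-s_{i-1})$ is not \emph{exactly} a Riemann sum for $u\mapsto f'(u)\Za{u}$, because the two factors are evaluated at different points ($\xi_i$ versus $s_i$). The fix is one line and uses only facts you already invoked: replacing $f'(\xi_i)$ by $f'(s_i)$ changes the sum by at most $2T\,\omega_{f'}(\delta_n)\,\sup_{x\in[-T;T]}|\Za{x}|$, where $\omega_{f'}$ is the modulus of continuity of $f'$ and $\delta_n$ the mesh; this tends to $0$ almost surely by uniform continuity of $f'$ and almost sure boundedness of the c\`adl\`ag path, after which $\sum_i f'(s_i)\Za{s_i}(s_i-s_{i-1})$ is a genuine tagged Riemann sum and your conclusion stands.
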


\section{Proofs of intermediate results}\label{sect:proofs}

The proofs of the intermediate results are given in the following natural order: firstly that of Lemma~\ref{prop:ipp}, secondly that of Proposition~\ref{omega1}, thirdly that of  Proposition~\ref{prop:regul2}, and finally that of  Proposition~\ref{prop:regul1}. 

\begin{proof}[Proof of Lemma~\ref{prop:ipp}]
Let $a<b$ be two fixed real numbers such that 
\begin{equation}
\label{eq1:prop:ipp}
\supp{f} \subseteq [a;b]\,.
\end{equation}
 For any fixed integer $j\ge 0$, one denotes by $(y_{j,k})_{0\le k\le 2^j}$ the partition of the interval $[a;b]$ such that 
\begin{equation}
\label{eq2:prop:ipp}
y_{j,k}:=a+k 2^{-j}(b-a)\,, \quad\mbox{for all $k\in\{0,\ldots, 2^j\}$.}
\end{equation}
Moreover, one denotes by $\theta_j$ the step function  defined as 
\begin{equation}
\label{eq3:prop:ipp}
\theta_j (s) := \sum_{k=1}^{2^j} f(y_{j,k}) \ind_{\left(y_{j,k-1};y_{j,k}\right]}(s)\,, \quad\mbox{for all $s\in\R$.}
\end{equation}
Let now show that 
\begin{equation}\label{conv:ftothetaj}
\lim_{j\rightarrow +\infty} \| f-\theta_j \|_{\La(\R)}=0.
\end{equation}
Using the definition of the $\La(\R)$-norm, (\ref{eq1:prop:ipp}), (\ref{eq3:prop:ipp}), the mean value theorem and (\ref{eq2:prop:ipp}), one obtains that
\begin{align*}
\| f-\theta_j \|_{\La(\R)}^{\al} & := \int_{\R} \big|f(s)-\theta_j(s)\big|^{\al} ds = \int_{\R} \Big| \sum_{k=1}^{2^j} (f(s)-f(y_{j,k})) \ind_{\left(y_{j,k-1};y_{j,k}\right]}(s) \Big|^{\al} ds \\
& = \sum_{k=1}^{2^j} \int_{\R} \big|f(s)-f(y_{j,k})\big|^{\al} \,\ind_{\left(y_{j,k-1};y_{j,k}\right]}(s) ds\\
&  \le \Big(\sup_{s\in [a;b]} |f'(s)|^{\al} \Big) \sum_{k=1}^{2^j} \int_{\R} (y_k-s)^{\al}  \ind_{\left(y_{j,k-1};y_{j,k}\right]}(s) ds \\
& =\Big(\sup_{s\in [a;b]} |f'(s)|^{\al} \Big)(\al+1)^{-1}\,(b-a)^{\al+1} \sum_{k=1}^{2^j} 2^{-j(\al+1)}\\
& = \Big(\sup_{s\in [a;b]} |f'(s)|^{\al} \Big)(\al+1)^{-1}\,(b-a)^{\al+1}\, 2^{-j\al}\, , 
\end{align*}
which shows that (\ref{conv:ftothetaj}) is satisfied. Then, one can deduce from (\ref{conv:ftothetaj}) and from a classical property of an $\al$-stable stochastic integral, with $\al>1$, (see \citep{SamTaq}) that 
\begin{equation*}
\lim_{j\rightarrow +\infty}\ESP\bigg |\int_{\R} f(s) \Za{ds} - \int_{\R} \theta_j(s) \Za{ds}\bigg|=0\,.
\end{equation*}

On the other hand, using (\ref{eq3:prop:ipp}), standard computations and elementary properties of a stable stochastic integral, one gets that
\begin{align*}
& \int_{\R} \theta_j(s) \Za{ds} = \sum_{k=1}^{2^j} f(y_{j,k}) \Big( \Za{y_{j,k}} - \Za{y_{j,k-1}} \Big) \\
& = f(y_{j,2^j})\Za{y_{j,2^j}}-f(y_{j,1})\Za{y_{j,0}} - \sum_{k=1}^{2^j-1} \Big(f(y_{j,k+1})-f(y_{j,k})\Big) \Za{y_{j,k}}.
\end{align*}
Notice that the inclusion (\ref{eq1:prop:ipp}) implies that $f(y_{j,2^j})\Za{y_{j,2^j}}= f(b)\Za{b}=0$ and that $\ESP \big |f(y_{j,1})\Za{y_{j,0}}\big|= \ESP \big |f\big(a+2^{-j}(b-a)\big)\Za{a}\big| \rightarrow 0$, when $j\rightarrow +\infty$. Also notice that it follows from Taylor formula that 
\begin{align*}
& \sum_{k=1}^{2^j-1} \Big(f(y_{j,k+1})-f(y_{j,k})\Big) \Za{y_{j,k}} \\
& = \sum_{k=1}^{2^j -1 } (y_{j,k+1}-y_{j,k})f'(y_{j,k}) \Za{y_{j,k}} \\
& \qquad+ \sum_{k=1}^{2^j-1} (y_{j,k+1}-y_{j,k})^2\,\Za{y_{j,k}} \int_{0}^{1} (1-s) f''\big(y_{j,k}+s(y_{j,k+1}-y_{j,k})\big) ds.
\end{align*}
Let us now show that the absolute first moment of the last sum converges to $0$ when $j\rightarrow +\infty$. Using the triangle inequality, the inequalities 
$\sup_{x\in [a;b]} |f''(x)|<\infty$ and $\sup_{x\in [a;b]} \ESP|\Za{x}|<\infty$, and the equality (\ref{eq2:prop:ipp}), one obtains that 
\begin{align*}
& \ESP\bigg |\sum_{k=1}^{2^j-1} (y_{j,k+1}-y_{j,k})^2\,\Za{y_{j,k}} \int_{0}^{1} (1-s) f''\big(y_{j,k}+s(y_{j,k+1}-y_{j,k})\big) ds \bigg| \\
&  \le  \Big (\sup_{x\in [a;b]} |f''(x)| \Big) \Big(\sup_{x\in [a;b]} \ESP|\Za{x}| \Big) \sum_{k=1}^{2^j-1} (y_{j,k+1}-y_{j,k})^2 \\
& \le (b-a)^2\Big (\sup_{x\in [a;b]} |f''(x)| \Big) \Big(\sup_{x\in [a;b]} \ESP|\Za{x}| \Big)  2^{-j} \rightarrow 0\,,\,\, \mbox{ when } j\rightarrow +\infty.
\end{align*} 
In order to complete our proof, it remains to show that
\begin{equation}\label{conv:somRiem}
\lim_{j\rightarrow +\infty}\ESP \bigg|\sum_{k=1}^{2^j -1 } (y_{j,k+1}-y_{j,k})f'(y_{j,k}) \Za{y_{j,k}} -\int_{a}^{b} f'(s)\Za{s} ds\bigg|=0\,.
\end{equation}
It follows from the equality $f'(y_{j,0})=f'(a)=0$ and from elementary properties of a stable stochastic integral that
\[
\sum_{k=1}^{2^j -1 } (y_{j,k+1}-y_{j,k})f'(y_{j,k}) \Za{y_{j,k}}=\int_{a}^{b} \bigg(\sum_{k=0}^{2^j-1} f'(y_{j,k}) \Za{y_{j,k}} \ind_{(y_{j,k};y_{j,k+1}]}(s)\bigg) ds\,.
\]
Thus, using the Fubini-Tonelli theorem and (\ref{eq2:prop:ipp}), it turns out that for deriving (\ref{conv:somRiem}) it is enough to show that
\begin{equation}
\label{conv:somRiem1}
\lim_{j\rightarrow +\infty} 2^{-j}\sum_{k=0}^{2^j-1}\,\sup_{s\in [y_{j,k};y_{j,k+1}]} \Big\{\ESP\big |f'(y_{j,k}) \Za{y_{j,k}}-f'(s) \Za{s}\big |\Big\}=0\,.
\end{equation}
It results from properties of $f'$ and $\mathrm{Z}_{\al}$ that there exists a finite constant $c$ such that, for all $(x,y)\in [a;b]^2$, one has
\[
\big | f'(x)-f'(y)\big|\le c|x-y|\quad\mbox{and} \quad \ESP\big | \Za{x}-\Za{y}\big|\le c|x-y|^{1/\al}\,.
\]
Finally, using these two inequalities, the equality (\ref{eq2:prop:ipp}), and standard computations one can get (\ref{conv:somRiem1}).
\end{proof}

\begin{proof}[Proof of Proposition~\ref{omega1}]
First observe that, it follows from (\ref{daubdef:ejk}), Lemma~\ref{prop:ipp} and the change of variable $u=2^js-k$, that one has, almost surely,
\begin{equation}
\label{eq5:omega1}
\epsilon_{j,k} := 2^{j/\alpha}\int_{\R}  \psi(2^js-k) Z_{\alpha}(ds) = -2^{j/\al} \int_{\R} \psi'(u) \Za{(u+k)2^{-j}} du.
\end{equation}
Next, let $R$ be a fixed positive real number such that $\supp{\psi}\subseteq [-R;R]$. Then using (\ref{eq5:omega1}) and the assumption that $|k|2^{-j}\le l$, one gets, almost surely, that 
$$
|\epsilon_{j,k}|\le C' 2^{j/\al}\,,
$$
where $C'$ is the positive and finite random variable, not depending on $(j,k)$, defined, almost surely, as
\[
C':=\bigg (\sup_{|x|\le R+l} \left|\Za{x} \right| \bigg)\times\left(\int_{-R}^{R} \left|\psi'(u)\right| du \right)\,.
\]
Notice that the almost sure finiteness of $C'$ is mainly a consequence of the fact that the sample paths of the L\'evy $\sas$ process $\{\Za{s}:s\in\R\}$ are, with probability~$1$, c\`adl\`ag functions on $\R$.
\end{proof}


\begin{proof}[Proof of Proposition \ref{prop:regul2}]
First observe that in order to show that (\ref{eq1:regul2}) is satisfied, it is enough to prove that 
\begin{eqnarray}
\label{eq2:regul2}
&& \sup\bigg\{\frac{|\ddot{X}(u_1,v,\omega)-\ddot{X}(u_2,v,\omega)|}{|u_1-u_2|^{v-1/\alpha}} : (u_1,u_2,v)\in [-M;M]^2\times [a;b] \\
&&\hspace{9cm} \mbox{ and } |u_1-u_2|>1 \bigg\}<\infty\nonumber
\end{eqnarray}
and
\begin{eqnarray}
\label{maj:sups1}
&& \sup\bigg\{\frac{|\ddot{X}(u_1,v,\omega)-\ddot{X}(u_2,v,\omega)|}{|u_1-u_2|^{v-1/\alpha}} : (u_1,u_2,v)\in [-M;M]^2\times [a;b] \\
&&\hspace{8.5cm} \mbox{ and } 0<|u_1-u_2|\le 1 \bigg\}<\infty\nonumber
\end{eqnarray}
Notice that Remark~\ref{rem:cont-lowhigh} in Section~\ref{sect:metho} easily implies that (\ref{eq2:regul2}) is satisfied. So from now on, we focus 
on  (\ref{maj:sups1}). 

Let $(u_1,u_2,v)\in [-M;M]^2 \times [a;b]$ be arbitrary but such that $0<|u_1-u_2|\le 1$. In view of Relation~(\ref{eq:localisation}), there exists a constant $c>0$, which does not depend on $u_1,u_2$ and $v$, such that, for all $(j,k) \in \Z^+ \times \Z$, one has
\begin{equation}\label{maj:Psi1}
\left| \Psi(2^ju_1-k,v)-\Psi(2^j u_2-k,v) \right| \le c_1 \Big ( (3+|2^ju_1-k|)^{-2} + (3+|2^ju_2-k|)^{-2} \Big)\,.
\end{equation}
Moreover, 
$$
\left| \Psi(2^ju_1-k,v)-\Psi(2^j u_2-k,v) \right|
$$
can be bounded more sharply when the condition
\begin{equation}\label{maj:uu12}
2^j|u_1-u_2| \le 1
\end{equation}
holds. More precisely, using the mean value theorem and (\ref{eq:localisation}), one has
\begin{align}\label{maj:Psi2}
& \left| \Psi(2^ju_1-k,v)-\Psi(2^j u_2-k,v) \right| \nonumber \\
& \le 2^j |u_1-u_2| \sup_{(u,v)\in [u_1\wedge u_2,u_1\vee u_2] \times[a,b]} | (\partial_x \Psi)(2^ju-k,v)| \nonumber\\
&  \le c 2^j |u_1-u_2| \sup_{u\in [u_1\wedge u_2,u_1\vee u_2] } \left(3+|2^ju-k| \right)^{-2} \le c 2^j |u_1-u_2|  (2+|2^ju_1-k|)^{-2}.
\end{align}
Next, one denotes by $j_0$ the greater integer such that 
\begin{equation}\label{def:j0}
2^{-1}< 2^{j_0} |u_1-u_2| \le 1.
\end{equation}
Observe that one necessarily has that $j_0\ge 0$ since $|u_1-u_2| \le 1$. By combining together (\ref{maj:Psi1}), (\ref{maj:Psi2}) and (\ref{def:j0}) together, one gets
\begin{align}\label{maj:decompAj0Bj0}
& \sum_{j\ge 0} 2^{-jv} \sum_{k\in\Z} |\epsilon_{j,k}| \left| \Psi(2^ju_1-k,v)-\Psi(2^j u_2-k,v) \right| \nonumber \\
& \le c\left( A_{j_0}(u_1,v) 2^{-j_0} + B_{j_0}(u_1,u_2,v) \right),
\end{align}
where
\begin{equation}\label{def:Aj0}
A_{j_0}(u_1,v) = \sum_{j= 0}^{j_0} 2^{j(1-v)} \sum_{k\in\Z} |\epsilon_{j,k}| (2+|2^ju_1-k|)^{-2}
\end{equation}
and
\begin{equation}\label{def:Bj0}
B_{j_0}(u_1,u_2,v) = \sum_{j=j_0+1}^{+\infty} 2^{-jv} \sum_{k\in\Z} |\epsilon_{j,k}| \Big ( (3+|2^ju_1-k|)^{-2} + (3+|2^ju_2-k|)^{-2}  \Big)\,.
\end{equation}
In order to provide appropriate upper bounds for $A_{j_0}(u_1,v) 2^{-j_0}$ and $B_{j_0}(u_1,u_2,v)$, one needs to introduce, for all fixed integer $j\ge 0$, the two sets of indices $k$, $D_j^1$ and $D_j^2$, defined as
\begin{equation}\label{def:DJ12}
D_j^1:=\Big\{ k\in\Z\,:\,  |k|2^{-j}\le 2(M+1)  \Big\} \mbox{ and } D_j^2:=\Big\{ k\in\Z\,:\,  |k|2^{-j} > 2(M+1)  \Big\}\,.
\end{equation}
Notice that $\Z=D_j^1 \cup D_j^2$ and $D_j^1 \cap D_j^2 =\emptyset$.

Let us first provide an appropriate upper bound for $A_{j_0}(u_1,v) 2^{-j_0}$. On one hand
Relation~(\ref{eq1:omega0}) and standard computations give us 
\begin{align}
& \sum_{j= 0}^{j_0} 2^{j(1-v)} \sum_{k\in D_j^2} |\epsilon_{j,k}| (2+|2^ju_1-k|)^{-2} \nonumber \\
& \le C \sum_{j= 0}^{j_0} 2^{j(1-v)} (3+j)^{1/\al+\eta} \sum_{k\in D_j^2} \frac{(3+|k|)^{1/\al+\eta}}{(2+|2^ju_1-k|)^{2}} \nonumber \\
& \le C \sum_{j= 0}^{j_0} 2^{j(1-v)} (3+j)^{1/\al+\eta} \sum_{k\in D_j^2} (1+|k|)^{-(2-1/\al-\eta)} \nonumber \\
& \le C \sum_{j= 0}^{j_0} 2^{j(1-v)} (3+j)^{1/\al+\eta}\,  2^{-j(1-1/\al-\eta)} = C \sum_{j= 0}^{j_0} 2^{-j(v-1/\al-\eta)} (3+j)^{1/\al+\eta} \nonumber \\
& \le C \sum_{j= 0}^{j_0} 2^{-j(a-1/\al-\eta)} (3+j)^{1/\al+\eta} \le C \sum_{j= 0}^{+\infty} 2^{-j(a-1/\al-\eta)} (3+j)^{1/\al+\eta} = C_1 <\infty\,; \label{maj:Aj0:Dj2}
\end{align}
notice that in the previous inequalities, and in the rest of this proof, $C$ denotes the same positive and finite random variable as in~(\ref{eq1:omega0}). 
On the other hand, using Proposition~\ref{omega1}, one has
\begin{align}
& \sum_{j= 0}^{j_0} 2^{j(1-v)} \sum_{k\in D_j^1} |\epsilon_{j,k}| (2+|2^ju_1-k|)^{-2}  \le C'\sum_{j=0}^{j_0} 2^{j(1-v+1/\al)} \sum_{k\in D_j^1} (2+|2^ju_1-k|)^{-2} \nonumber \\
& \le C' \sup_{x\in [0,1]} \left( \sum_{k\in\Z} (1+|x-k|)^{-2} \right) \sum_{j=0}^{j_0} 2^{j(1-v+1/\al)} \le C_2 2^{j_0(1-v+1/\al)}.  \label{maj:Aj0:Dj1}
\end{align}
notice that in the previous inequalities $C'$, and in the rest of this proof, denotes the same positive and finite random variable as in~(\ref{eq1:omega1}). 
Next, it follows from (\ref{def:Aj0}), (\ref{maj:Aj0:Dj2}), (\ref{maj:Aj0:Dj1}) and (\ref{def:j0}) that
\begin{align}
& A_{j_0}(u_1,v) 2^{-j_0} = \left( \sum_{j= 0}^{j_0} 2^{j(1-v)} \sum_{k\in D_j^1} |\epsilon_{j,k}| (2+|2^ju_1-k|)^{-2} \right. \nonumber \\
& \qquad \qquad \qquad + \left. \sum_{j= 0}^{j_0} 2^{j(1-v)} \sum_{k\in D_j^2} |\epsilon_{j,k}| (2+|2^ju_1-k|)^{-2}\right) 2^{-j_0} \nonumber \\
& \le \left(C_1+ C_2 2^{j_0(1-v+1/\al)} \right) 2^{-j_0}   \le C_3 |u_1-u_2|^{v-1/\al}. \label{maj:Aj0}
\end{align}

Let us now provide an appropriate upper bound for $B_{j_0}(u_1,u_2,v)$. On one hand, using Relation~(\ref{eq1:omega0}) and standard computations one gets
\begin{align}\label{maj:Bj0:Dj2}
& \sum_{j=j_0+1}^{+\infty} 2^{-jv} \sum_{k\in D_j^2} |\epsilon_{j,k}| \left( (3+|2^ju_1-k|)^{-2} + (3+|2^ju_2-k|)^{-2} \right)\nonumber \\
& \le C \sum_{j=j_0+1}^{+\infty} 2^{-jv} (3+|j|)^{1/\al+\eta} \sum_{k\in D_j^2} \left( \frac{(3+|k|)^{1/\al+\eta}}{(3+|2^ju_1-k|)^{2}} + \frac{(3+|k|)^{1/\al+\eta}}{(3+|2^ju_2-k|)^{2}} \right) \nonumber \\
& \le C \sum_{j=j_0+1}^{+\infty} 2^{-jv} (3+|j|)^{1/\al+\eta} \sum_{k\in D_j^2} (1+|k|)^{-(2-1/\al-\eta)}. \nonumber\\
& \le C \sum_{j=j_0+1}^{+\infty} 2^{-j(1+v-1/\al-\eta)} (3+|j|)^{1/\al+\eta}  \le C 2^{-(j_0+1)(1+v-1/\al-\eta)} (2+j_0)^{1/\al+\eta} \nonumber \\
& \le C 2^{-(j_0+1)(v-1/\al)} \sup_{n\in\mathbb{N}}\Big\{2^{-n(1-\eta)}(1+n)^{1/\al+\eta}\Big\}\le C_4 |u_1-u_2|^{v-1/\al}. 
\end{align}
On the other hand, using Proposition~\ref{omega1}, one has 
\begin{align}\label{maj:Bj0:Dj1}
& \sum_{j=j_0+1}^{+\infty} 2^{-jv} \sum_{k\in D_j^1} |\epsilon_{j,k}| \left( (3+|2^ju_1-k|)^{-2} + (3+|2^ju_2-k|)^{-2}     \right)\nonumber \\
& \le C'\sum_{j=j_0+1}^{+\infty} 2^{-j(v-1/\al)} \sum_{k\in D_j^1} \left( (3+|2^ju_1-k|)^{-2} + (3+|2^ju_2-k|)^{-2} \right)\nonumber \\
& \le C'\left( 2 \sup_{x\in [0,1]} \sum_{k\in\Z} (3+|x-k|)^{-2} \right) \sum_{j=j_0+1}^{+\infty} 2^{-j(v-1/\al)} \nonumber \\
& \le C_5 2^{-(j_0+1)(v-1/\al)} \le C_5 |u_1-u_2|^{v-1/\al}.
\end{align}
Next combining (\ref{maj:Bj0:Dj2}) and (\ref{maj:Bj0:Dj1}), one obtains that
\begin{equation}\label{maj:Bj0}
B_{j_0}(u_1,u_2,v) \le C_6 |u_1-u_2|^{v-1/\alpha},
\end{equation}
where $C_6$ is a positive and finite random variable. Finally, Relations (\ref{maj:decompAj0Bj0}), (\ref{maj:Aj0}) and (\ref{maj:Bj0}) allow us to derive (\ref{maj:sups1}). 
\end{proof}

\begin{proof}[Proof of Proposition \ref{prop:regul1}]
Let $\omega\in\Omega_0^*$ be arbitrary and fixed. In view of Lemma~\ref{localisation} and of Remark~\ref{rem:cont-lowhigh} in Section~\ref{sect:metho}, for proving the proposition, it is enough to show that, for all fixed integer $r\ge 1$ and for any fixed $(p,q)\in \{1,2,3\}\times \Z^+$, the two series of real numbers
\[
\sum_{j=1}^{+\infty} \sum_{k\in\Z}  \epsilon_{-j,k}(\o)2^{jv} \big (( \partial_v^r \Psi)(2^{-j} u-k,v)-( \partial_v^r \Psi)(-k,v)\big)
\]
and
\[
\sum_{j=1}^{+\infty} \sum_{k\in\Z}   \epsilon_{-j,k}(\omega) 2^{-j(p-v)} (\partial_u^p \partial_v^q \Psi)(2^{-j} u-k,v)
\]
are uniformly convergent when $(u,v) \in [-M;M]\times [a;b]$, where $M,a,b$ arbitrary and fixed real numbers satisfying $M>0$ and $1/\alpha<a < b<1$. In fact it is sufficient to show that these two series are normally convergent, in sense of the uniform (semi)-norm on the compact rectangle $[-M;M]\times [a;b]$. That is one has, for all fixed integer $r\ge 1$,
\begin{equation}\label{dpu:Xpt:oub}
\sum_{j=1}^{+\infty} \sum_{k\in\Z} \big |\epsilon_{-j,k}(\o)\big|\sup_{(u,v)\in  [-M;M]\times [a;b]}\Big\{2^{jv} \big |( \partial_v^r \Psi)(2^{-j} u-k,v)-( \partial_v^r \Psi)(-k,v)\big|\Big\}<\infty\,,
\end{equation}
and, one has, for any fixed $(p,q)\in \{1,2,3\}\times \Z^+$,
\begin{equation}\label{dpu:Xpt}
\sum_{j=1}^{+\infty} \sum_{k\in\Z}   \big|\epsilon_{-j,k}(\omega)\big|\sup_{(u,v)\in  [-M;M]\times [a;b]} \Big\{ 2^{-j(p-v)} \big |(\partial_u^p \partial_v^q \Psi)(2^{-j} u-k,v)\big|\Big\}<\infty.
\end{equation}
Next, observe that a sufficient condition for having (\ref{dpu:Xpt:oub}) is that (\ref{dpu:Xpt}) holds when $(p,q)=(1,r)$. Indeed, it follows from the mean value theorem that
\begin{align*}
&\sup_{(u,v)\in  [-M;M]\times [a;b]}\Big\{2^{jv} \big |( \partial_v^r \Psi)(2^{-j} u-k,v)-( \partial_v^r \Psi)(-k,v)\big|\Big\} \\
& \hspace{3.7cm}\le M\times\sup_{(u,v)\in  [-M;M]\times [a;b]} \Big\{ 2^{-j(1-v)} \big |(\partial_u^1 \partial_v^r \Psi)(2^{-j} u-k,v)\big|\Big\}\,.
\end{align*}
So, from now on, our goal is to derive (\ref{dpu:Xpt}). One denotes by $[\,\cdot\,]$ the integer part function and by $\{\,\cdot\,\}$ the fractional part function. One assumes that $\eta$ is an arbitrarily small fixed positive real number. By using Relation~(\ref{eq1:omega0}), Relation~(\ref{eq:localisation}) and the triangle inequality, one has, for all $(u,v)\in  [-M;M]\times [a;b]$, 
\begin{align}
& \sum_{j=1}^{+\infty}  \sum_{k\in\Z}   \left| \epsilon_{-j,k}(\omega) \right| \times   2^{-j(p-v)}\left| (\partial_u^p \partial_v^q \Psi)(2^{-j} u-k,v) \right| \nonumber \\
& \le C(\omega) \sum_{j=1}^{+\infty} 2^{-j(p-v)} (3+j)^{1/\al+\eta} \sum_{k \in \Z} \frac{(3+|k|)^{1/\al+\eta}}{(3+|2^{-j}u-k|)^2} \nonumber \\
& = C(\omega) \sum_{j=1}^{+\infty} 2^{-j(p-v)} (3+j)^{1/\al+\eta} \sum_{k \in \Z} \frac{(3+|k+[2^{-j}u]|)^{1/\al+\eta}}{(3+|2^{-j}u - [2^{-j}u] -k|)^2} \nonumber \\
& \le C(\omega) \sum_{j=1}^{+\infty} 2^{-j(p-b)} (3+j)^{1/\al+\eta} \sum_{k \in \Z} \frac{(4+|k|+M|)^{1/\al+\eta}}{(3+|\{2^{-j}u\} -k|)^2}\,. \nonumber 
\end{align}
Notice that the last inequality follows from the inequalities $v\le b$ and $\big |[2^{-j}u]\big|\le 1+2^{-j}|u|\le 1+M$. Also notice that $C(\o)$ is a finite constant not depending on $(u,v)$. Next, let $\kappa$ be the constant defined as:
$$
\kappa:= \sup_{x\in [0;1]} \left\{\sum_{k \in \Z} \frac{(4+|k|+M|)^{1/\al+\eta}}{(3+|x -k|)^2}\right\}.
$$
From the inequalities $2-1/\al-\eta>1$ and $3+|x -k| \ge 2+|k|$, for all $(k,x)\in\Z\times [0,1]$, one deduces that $\kappa$ is finite. Then, using our previous calculations, one obtains that
\begin{align}
& \sum_{j=1}^{+\infty} \sum_{k\in\Z}   \big|\epsilon_{-j,k}(\omega)\big|\sup_{(u,v)\in  [-M;M]\times [a;b]} \Big\{ 2^{-j(p-v)} \big |(\partial_u^p \partial_v^q \Psi)(2^{-j} u-k,v)\big|\Big\}\nonumber\\
& \le \kappa C(\omega)  \sum_{j=1}^{+\infty} 2^{-j(p-b)} (3+j)^{1/\al+\eta} <\infty\,, \nonumber
\end{align}
where the last inequality results from the inequalities $p\ge 1>b$. This proves that (\ref{dpu:Xpt}) holds. 
\end{proof}

\section*{Acknowledgements}
The authors are very grateful to the editor and to the anonymous associate editor and two referees for their valuable comments and suggestions which have led to great improvements of the article. This work has been partially supported by ANR-11-BS01-0011 (AMATIS), GDR 3475 (Analyse Multifractale), and ANR-11-LABX-0007-01 (CEMPI).


\begin{thebibliography}{}


\bibitem[\protect\citeauthoryear{Ayache, A. and Hamonier, J.}{2014}]{hamonier2012lmsm}
Ayache, A. and Hamonier, J. (2014)
Linear multifractional stable motion: fine path properties  \textit{Rev. Mat. Iberoam},
\textbf{30}, 1301--1354.

\bibitem[\protect\citeauthoryear{Ayache, A. and Jaffard, S. and Taqqu, M.S.}{2007}]{ayache2007wavelet}
Ayache, A. and Jaffard, S. and Taqqu, M.S. (2007)
Wavelet construction of generalized multifractional processes \textit{Rev. Mat. Iberoam},
\textbf{23}, 327--370.

\bibitem[\protect\citeauthoryear{Ayache, A. and Roueff, F. and Xiao, Y.}{2009}]{ayache2009linear}
Ayache, A. and Roueff, F. and Xiao, Y. (2009)
Linear fractional stable sheets: wavelet expansion and sample path properties \textit{Stochastic Process. Appl.},
\textbf{119}, 1168--1197.

\bibitem[\protect\citeauthoryear{Ayache, A. and Shieh, N.R. and Xiao, Y.}{2011}]{ayache2011multiparameter}
Ayache, A. and Shieh, N.R. and Xiao, Y. (2011)
Multiparameter multifractional Brownian
motion: local nondeterminism and joint continuity of the local times \textit{Ann. Inst. Henri Poincar\'e (B) Probab. Stat.},
\textbf{47}, 1029--1054.

\bibitem[\protect\citeauthoryear{Ayache, A. and Taqqu, M.S.}{2005}]{ayache2005multifractional}
Ayache, A. and Taqqu, M.S. (2005)
Multifractional processes with random exponent \textit{Publ. Mat.},
\textbf{49}, 459--486.
%
\bibitem[\protect\citeauthoryear{Balan{\c{c}}a, P.}{2014}]{balancca2014fine}
Balan{\c{c}}a, P. (2014)
Fine regularity of L{\'e}vy processes and linear (multi)fractional stable motion, \emph{Electron. J. Probab.},\textbf{19}, 1--37.

\bibitem[\protect\citeauthoryear{Balan{\c{c}}a, P.}{2015}]{balancca2015mul}
Balan{\c{c}}a, P. (2015)
Some sample path properties of multifractional {B}rownian motion, \emph{tochastic Process. Appl.},\textbf{125}, 3823--3850.

\bibitem[\protect\citeauthoryear{Bardet, J.M. and Surgailis, D.}{2013}]{bardet2010nonparametric}
Bardet, J.M. and Surgailis, D.(2013)
Nonparametric estimation of the local {H}urst function of
multifractional {G}aussian processes \textit{Stochastic Process. Appl.},
\textbf{123}, 1004--1045.
%
%
\bibitem[\protect\citeauthoryear{Benassi, A. and Jaffard, S. and Roux, D.}{1997}]{roux1997elliptic}
Benassi, A. and Jaffard, S. and Roux, D. (1997)
Elliptic {G}aussian random processes \textit{Rev. Mat. Iberoam},
\textbf{13}, 19--90.
%
\bibitem[\protect\citeauthoryear{Bierm\'e, H. and Lacaux, C.}{2013}]{Bierme13}
Bierm\'e, H. and Lacaux, C. (2013)
Linear multifractional multistable motion: LePage
series representation and modulus of continuity \textit{Annals of the University of Bucharest (mathematical series)},\textbf{LXII}, 4, 345--360.

\bibitem[\protect\citeauthoryear{Bianchi, S.}{2005}]{Bi}
Bianchi, S. (2005)
Pathwise identification of the memory function of multifractional {B}rownian motion with application to finance \textit{Int. J. Theor. Appl. Finance},
\textbf{8}, 255--281.

\bibitem[\protect\citeauthoryear{Bianchi, S. and Pantanella, A. and Pianese, A.}{2011}]{BiPP}
Bianchi, S. and Pantanella, A. and Pianese, A. (2011)
Modeling stock prices by multifractional {B}rownian motion: an improved estimation of the pointwise regularity \textit{Quant. Finance},
\textbf{13}, 1317--1330.

\bibitem[\protect\citeauthoryear{Bianchi, S. and Pianese, A.}{2008}]{BiP}
Bianchi, S. and Pianese, A. (2008)
Multifractional properties of stock indices decomposed by
filtering their pointwise {H}\"older regularity \textit{Int. J. Theor. Appl. Finance},
\textbf{11}, 567--595.


\bibitem[\protect\citeauthoryear{Daubechies, I.}{1992}]{Dau92}
Daubechies, I. (1992)
Ten lectures on wavelets, \emph{Society for Industrial Mathematics}, \textbf{61}.

\bibitem[\protect\citeauthoryear{Dozzi, M. and Shevchenko, G.}{2011}]{dozzi2011}
Dozzi, M. and Shevchenko, G. (2011)
Real harmonizable multifractional stable process and its local properties \textit{Stochastic Process. Appl.},
\textbf{121}, 1509--1523.

\bibitem[\protect\citeauthoryear{Falconer, K.J.}{2002}]{falconer2002tangent}
Falconer, K.J. (2002)
Tangent fields and the local structure of random fields \textit{J. Theoret. Probab.},
\textbf{15}, 731--750.


\bibitem[\protect\citeauthoryear{Falconer, K.J.}{2003}]{falconer2003local}
Falconer, K.J. (2003)
The local structure of random processes \textit{J. London Math. Soc. (2)},
\textbf{67}, 657--672.


\bibitem[\protect\citeauthoryear{Falconer, K.J. and Le Gu{\'e}vel, R. and L{\'e}vy V{\'e}hel, J.}{2009}]{falconer2009localizable}
Falconer, K.J. and Le Gu{\'e}vel, R. and L{\'e}vy V{\'e}hel, J. (2009)
Localizable moving average symmetric stable and multistable processes \textit{Stoch. Models},
\textbf{25}, 648--672.


\bibitem[\protect\citeauthoryear{Falconer, K.J. and L{\'e}vy V{\'e}hel, J.}{2009}]{falconer2009multifractional}
Falconer, K.J. and L{\'e}vy V{\'e}hel, J. (2009)
Multifractional, multistable, and other processes with prescribed local form \textit{J. Theoret. Probab.},
\textbf{22}, 375--401.


\bibitem[\protect\citeauthoryear{Hashorva, E. and Lifshits, M. and Seleznjev, O.}{2015}]{HLS12}
Hashorva, E. and Lifshits, M. and Seleznjev, O. (2015)
Approximation of a random process with variable smoothness \textit{Mathematical Statistics and Limit Theorems}, Springer, 189--208.

\bibitem[\protect\citeauthoryear{K{\^o}no, N. and Maejima, M.}{1991}]{kono1991holder}
K{\^o}no, N. and Maejima, M. (1991)
H{\"o}lder continuity of sample paths of some self-similar stable processes \textit{Tokyo J. Math.},
\textbf{14}, 93--100.
%
\bibitem[\protect\citeauthoryear{Lacaux, C.}{2004}]{lacaux04}
Lacaux, C. (2004)
Real harmonizable multifractional {L}{\'e}vy motions  \textit{Ann. Inst. H. Poincar\'e (B) Probab. Stat.},
\textbf{40}, 259--277.
%
\bibitem[\protect\citeauthoryear{Leonenko, N. and Ruiz-Medina, M.D. and Taqqu, M.S.}{2011}]{LRMT11}
Leonenko, N. and Ruiz-Medina, M.D. and Taqqu, M.S. (2011)
Fractional elliptic, hyperbolic and parabolic random fields  \textit{Electron. J. Probab.},
\textbf{16}, 1134--1172.
%
\bibitem[\protect\citeauthoryear{Lopes, R. and Ayache, A. and Makni, N. and Puech, P. and Villers, A. and Mordon, S. and Betrouni, N.}{2011}]{ayache2011prostate}
Lopes, R. and Ayache, A. and Makni, N. and Puech, P. and Villers, A. and Mordon, S. and Betrouni, N. (2011)
Prostate cancer characterization on images using fractal features  \textit{Medical Physics},
\textbf{38}, 83--95.

\bibitem[\protect\citeauthoryear{Maejima, M. }{1983}]{maejima1983self}
Maejima, M. (1983)
A self-similar process with nowhere bounded sample paths, \emph{Probability Theory and Related Fields},
\textbf{65}, 1, 115--119.


\bibitem[\protect\citeauthoryear{Meerschaert, M. and Wu, D. and Xiao, Y.}{2008}]{meerschaert2008local}
Meerschaert, M. and Wu, D. and Xiao, Y. (2008)
Local times of multifractional {B}rownian sheets \textit{Bernoulli},
\textbf{14}, 865--898.


\bibitem[\protect\citeauthoryear{Meyer, Y.}{1990}]{Meyer90}
Meyer, Y. (1990)
Ondelettes et Op\'erateurs , \textbf{1},
\emph{Hermann}, Paris.

\bibitem[\protect\citeauthoryear{Meyer, Y.}{1992}]{Meyer92}
Meyer, Y. (1992)
Wavelets and operators, \textbf{2},
\emph{Cambridge Univ Press}.

\bibitem[\protect\citeauthoryear{Peltier, R.F. and L{\'e}vy V{\'e}hel, J.}{1995}]{peltier1995multifractional}
Peltier, R.F. and L{\'e}vy V{\'e}hel, J. (1995)
Multifractional {B}rownian motion: definition and preliminary results \textit{Rapport de recherche de l'INRIA}, \textbf{2645}.


\bibitem[\protect\citeauthoryear{Samorodnitsky, G. and Taqqu, M. S.}{1994}]{SamTaq}
Samorodnitsky, G. and Taqqu, M. S. (1994)
Stable non-{G}aussian Random Variables,  
\emph{Chapman and Hall}, London.
%
\bibitem[\protect\citeauthoryear{Stoev, S. and Taqqu, M. S.}{2004}]{stoev2004stochastic}
Stoev, S. and Taqqu, M. S. (2004)
Stochastic properties of the linear multifractional stable motion \textit{Adv. in Appl. Probab.}
\textbf{36}, 1085--1115 .


\bibitem[\protect\citeauthoryear{Stoev, S. and Taqqu, M. S.}{2005}]{stoev2005path}
Stoev, S. and Taqqu, M. S. (2005)
Path properties of the linear multifractional stable motion \textit{Fractals}
\textbf{13}, 157--178.


\bibitem[\protect\citeauthoryear{Stoev, S. and Taqqu, M. S.}{2006}]{stoev2006rich}
Stoev, S. and Taqqu, M. S. (2006)
How rich is the class of multifractional {B}rownian motions? \textit{Stochastic Process. Appl.}
\textbf{116}, 200--221.

\bibitem[\protect\citeauthoryear{Surgailis, D.}{2008}]{Sur}
Surgailis, D. (2008)
Nonhomogeneous fractional integration and multifractional processes \textit{Stochastic Process. Appl.}
\textbf{118}, 171--198.


\bibitem[\protect\citeauthoryear{Takashima, K.}{1989}]{Tak89}
Takashima, K. (1989)
Sample path properties of ergodic self-similar processes \textit{Osaka J. Math.}
\textbf{26}, 159--189.








\end{thebibliography}
\end{document}